\documentclass[12pt,reqno,openbib,runningheads,a4paper]{amsart}%
\usepackage{graphicx}
\usepackage{amssymb}
\usepackage{amsfonts}
\usepackage{amsmath}
\usepackage{graphicx}
\usepackage{lineno}
\usepackage[authoryear]{natbib}
\usepackage[dvipsnames]{xcolor}
\usepackage{epsf}
\usepackage{lscape}
\usepackage[legalpaper,bookmarks=true,colorlinks=true,linkcolor=blue,citecolor=blue]%
{hyperref}%
\setcounter{MaxMatrixCols}{30}
\providecommand{\U}[1]{\protect\rule{.1in}{.1in}}
\providecommand{\U}[1]{\protect\rule{.1in}{.1in}}
\textheight 240mm
\topmargin  3mm
\textwidth 150mm
\oddsidemargin 0mm
\evensidemargin 0mm

\newtheorem{theorem}{Theorem}[section]

\newtheorem{proposition}{Proposition}[section]

\newtheorem{lemma}{Lemma}[section]

\newtheorem{remark}{Remark}[section]

\makeatletter
\renewcommand{\@biblabel}[1]{}
\makeatother
\begin{document}

\begin{center}
{\Large \textbf{Estimating the mean of a heavy-tailed distribution under
random censoring}}\medskip\medskip

{\large Louiza Soltane, Djamel Meraghni, Abdelhakim Necir}$^{\ast}$\medskip

{\small \textit{Laboratory of Applied Mathematics, Mohamed Khider University,
Biskra, Algeria}}\bigskip\medskip
\end{center}

\noindent\textbf{Abstract}\medskip

\noindent The central limit theorem introduced by Stute [The central limit
theorem under random censorship. Ann. Statist. 1995; 23: 422-439] does not
hold for some class of heavy-tailed distributions. In this paper, we make use
of the extreme value theory to propose an alternative estimating approach of
the mean ensuring the asymptotic normality property. A simulation study is
carried out to evaluate the performance of this estimation procedure.\medskip

\noindent\textbf{Keywords:} Central limit theorem; Empirical process; Hill
estimator; Kaplan-Meier estimator; Random censoring.\medskip

\noindent\textbf{AMS 2010 Subject Classification:} 62P05; 62H20; 91B26; 91B30.

\vfill

\noindent{\small $^{\text{*}}$Corresponding author:
\texttt{necirabdelhakim@yahoo.fr} \newline\noindent\textit{E-mail
addresses:}\newline\texttt{louiza\_stat@yahoo.com} (L.~Soltane)\newline%
\texttt{djmeraghni@yahoo.com} (D.~Meraghni)}

\section{\textbf{Introduction\label{sec1}}}

\noindent Let $X_{1},...,X_{n}$ be $n\geq1$ independent copies of a
non-negative random variable (rv) $X,$ defined over some probability space
$(\Omega,\mathcal{A},\mathbb{P)},$ with absolutely continuous cumulative
distribution function (cdf) $F.$ An independent sequence of independent rv's
$Y_{1},...,Y_{n},$ with absolutely continuous cdf $G,$ censor them to the
right, so that at each stage $j$ we can only observe $Z_{j}:=\min(X_{j}%
,Y_{j})$ and $\delta_{j}:=\mathbf{1}\left\{  X_{j}\leq Y_{j}\right\}  ,$ with
$\mathbf{1}\left\{  \cdot\right\}  $ denoting the indicator function. The rv
$\delta_{j}$ indicates whether or not there has been censorship. Throughout
the paper, we use the notation $\overline{\mathcal{S}}(x):=\mathcal{S}%
(\infty)-\mathcal{S}(x),$ for any $\mathcal{S}.$ If $H$ denotes the cdf of the
observed $Z^{\prime}s,$ then, by the independence of $X_{1}$ and $Y_{1},$ we
have $\overline{H}\left(  z\right)  =\overline{F}\left(  z\right)
\overline{G}\left(  z\right)  .$ In our work, we assume that both $F$ and $G$
are heavy-tailed, this means that there exist to constants $\gamma_{1}>0$ and
$\gamma_{2}>0,$ called tail indices, such that%
\begin{equation}
\underset{z\rightarrow\infty}{\lim}\frac{\overline{F}(xz)}{\overline{F}%
(z)}=x^{-1/\gamma_{1}}\text{ and }\underset{z\rightarrow\infty}{\lim}%
\frac{\overline{G}(xz)}{\overline{G}(z)}=x^{-1/\gamma_{2}}, \label{Condition}%
\end{equation}
for any $x>0.$ Consequently, $H$ is heavy-tailed too, with tail index
$\gamma:=\gamma_{1}\gamma_{2}/(\gamma_{1}+\gamma_{2}).$ The class of
heavy-tailed distribution takes a significant role in extreme value theory. It
includes distributions such as Pareto, Burr, Fr\'{e}chet, $\alpha-$stable
$\left(  0<\alpha<2\right)  $ and log-gamma, known to be appropriate models
for fitting large insurance claims, log-returns, large fluctuations of prices,
etc. (see, e.g., \citeauthor{R07}, \citeyear{R07}). Examples of censored data
with apparent heavy tails can be found in \cite{GN11}. The nonparametric
maximum likelihood estimator of $F$ is given by \cite{KM58} as the product
limit estimator%
\[
F_{n}(x):=\left\{
\begin{tabular}
[c]{ll}%
$1-%
{\displaystyle\prod\limits_{Z_{j:n}\leq x}}
\left(  \dfrac{n-j}{n-j+1}\right)  ^{\delta_{\left[  j:n\right]  }}$ & for
$x<Z_{n:n}$\\
$1$ & for $x\geq Z_{n:n},$%
\end{tabular}
\ \ \ \ \ \ \ \ \ \ \ \ \right.  ,
\]
where $Z_{1:n}\leq...\leq Z_{n:n}$ denote the order statistics pertaining to
the sample $(Z_{1},...,Z_{n})$ with the corresponding concomitants
$\delta_{\left[  1:n\right]  },...,\delta_{\left[  n:n\right]  }$ satisfying
$\delta_{\left[  j:n\right]  }=\delta_{i}$ if $Z_{j:n}=Z_{i}.$ This estimator,
known as Kaplan-Meier estimator of $F,$ may be expressed as follows%
\begin{equation}
F_{n}(x):=\sum_{i=2}^{n}W_{i,n}\mathbf{1}\left\{  Z_{i:n}\leq x\right\}  ,
\label{KM}%
\end{equation}
where, for $2\leq i\leq n,$%
\[
W_{i,n}:=\dfrac{\delta_{\left[  i:n\right]  }}{n-i+1}%
{\displaystyle\prod\limits_{j=1}^{i-1}}
\left(  \dfrac{n-j}{n-j+1}\right)  ^{\delta_{\left[  j:n\right]  }},
\]
(see, e.g., \citeauthor{RT07}, \citeyear[page 162]{RT07}). The aim of this
paper is to propose an asymptotically normal estimator for the mean of $X,$%
\[
\mu:=\mathbf{E}[X]=\int_{0}^{\infty}\overline{F}(x)dx,
\]
whose existence requires that $\gamma_{1}<1.$ The sample mean for censored
data is obtained by substituting, in the previous equation, the cdf $F$ by its
estimator $F_{n}$ to have%
\[
\widetilde{\mu}:=%
{\displaystyle\sum\limits_{i=2}^{n}}
\frac{\delta_{\left[  i:n\right]  }}{n-i+1}%
{\displaystyle\prod\limits_{j=1}^{i-1}}
\left(  \frac{n-j}{n-j+1}\right)  ^{\delta_{\left[  j:n\right]  }}Z_{i:n}.
\]
The asymptotic normality of $\widetilde{\mu}_{n}$ is established by
\cite{Stute95}, under the assumptions that the integrals
\[
I_{1}:=\int_{0}^{\infty}x^{2}\Gamma_{0}^{2}(x)dH^{\left(  1\right)  }(x)\text{
and }I_{2}:=\int_{0}^{\infty}x\left(  \int_{0}^{x}\frac{dG(y}{\overline
{H}(y)\overline{G}(y)})\right)  ^{1/2}dF(x),
\]
be finite, where $\Gamma_{0}(x):=\exp\left\{  \int_{0}^{x}dH^{\left(
0\right)  }(z)/\overline{H}(z)\right\}  $ with $H^{\left(  j\right)  }\left(
v\right)  :=\mathbb{P}\left(  Z\leq v,\text{ }\delta=j\right)  ,$ $j=0,1.$ In
the sequel, the latter functions will play a prominent role. However, when we
deal with heavy-tailed distributions, the quantities $I_{1}$ and $I_{2}$ may
be infinite. Indeed, suppose that both $F$ and $G$ are Pareto distributions,
that is $\overline{F}(x)=x^{-1/\gamma_{1}}$ and $\overline{G}(x)=x^{-1/\gamma
_{2}},$ for $x\geq1.$ This obviously gives $\overline{H}(x)=x^{-1/\gamma},$
$H^{\left(  0\right)  }(x)=\gamma(1-x^{-1/\gamma})/\gamma_{2},$ $H^{\left(
1\right)  }(x)=\gamma(1-x^{-1/\gamma})/\gamma_{1}$ and $\Gamma_{0}%
(x)=x^{1/\gamma_{2}}.$ Whenever $\left(  \gamma_{1},\gamma_{2}\right)  $ are
such that $\gamma_{1}>\gamma_{2}/\left(  1+2\gamma_{2}\right)  ,$ we readily
check that $I_{1}=I_{2}=\infty.$ In other words, the range%
\[
\mathcal{R}:=\left\{  \gamma_{1},\gamma_{2}>0:\frac{\gamma_{2}}{1+2\gamma_{2}%
}<\gamma_{1}<1\right\}  ,
\]
is not covered by the central limit theorem established by \cite{Stute95}, and
thus, another approach to handle this situation is needed. This problem was
already addressed by \cite{Peng01} for sets of complete data from heavy-tailed
distributions with tail indices lying between $1/2$ and $1.$ Note that in the
non censoring case, we have $\gamma_{1}=\gamma$ meaning that $\gamma
_{2}=\infty,$ consequently $\mathcal{R}$ reduces to Peng's range. The
consideration of the range $\mathcal{R}$ is motivated and supported from a
practical point of view as well. Indeed, as an example \cite{EFG08} analyzed
the Australian AIDS survival dataset and found that $\gamma_{1}=0.14$ and
$p=0.28$ leading to $\gamma_{2}=0.05.$ It is easily checked that these index
values belong to $\mathcal{R}$ and therefore Stute's result does not apply in
this situation. To define our new estimator, we introduce an integer sequence
$k=k_{n},$ representing a fraction of extreme order statistics, satisfying%
\begin{equation}
1<k<n,\text{ }k\rightarrow\infty\text{ and }k/n\rightarrow0\text{ as
}n\rightarrow\infty, \label{k}%
\end{equation}
and we set $h=h_{n}:=H^{-1}(1-k/n),$ where $K^{-1}(y):=\inf\left\{  x:K(x)\geq
y\right\}  ,$ $0<y<1,$ denotes the quantile function of a cdf $K.$ We start by
decomposing $\mu$ as the sum of two terms as follows:%
\[
\mu=\int_{0}^{h}\overline{F}(x)dx+\int_{h}^{\infty}\overline{F}(x)dx=:\mu
_{1}+\mu_{2},
\]
then we estimate each term separately. Integrating the first integral by parts
and changing variables in the second respectively yield%
\[
\mu_{1}=h\overline{F}(h)+\int_{0}^{h}xdF(x)\text{ and }\mu_{2}=h\overline
{F}\left(  h\right)  \int_{1}^{\infty}\frac{\overline{F}\left(  hx\right)
}{\overline{F}\left(  h\right)  }dx.
\]
By replacing $h$ and $F(x)$ by $Z_{n-k:n}$ and $F_{n}(x)$ respectively and
using formula $\left(  \ref{KM}\right)  ,$ we get%
\begin{equation}
\widehat{\mu}_{1}:=%
{\displaystyle\prod\limits_{j=1}^{n-k}}
\left(  \frac{n-j}{n-j+1}\right)  ^{\delta_{\left[  j:n\right]  }}%
Z_{n-k:n}+\sum_{i=2}^{n-k}\frac{\delta_{\left[  i:n\right]  }}{n-i+1}%
{\displaystyle\prod\limits_{j=1}^{i-1}}
\left(  \frac{n-j}{n-j+1}\right)  ^{\delta_{\left[  j:n\right]  }}Z_{i:n},
\label{mu1}%
\end{equation}
as an estimator to $\mu_{1}.$ Regarding $\mu_{2},$ we apply the well-known
Karamata theorem (see, for instance, \citeauthor{deHF06},
\citeyear[page 363]{deHF06}), to write%
\[
\mu_{2}\sim\frac{\gamma_{1}}{1-\gamma_{1}}h\overline{F}\left(  h\right)
,\text{ as }n\rightarrow\infty,\text{ }0<\gamma_{1}<1.
\]
The quantities $h$ and $\overline{F}\left(  h\right)  $ are, as above,
naturally estimated by $Z_{n-k:n}$ and%
\[
\overline{F}\left(  Z_{n-k:n}\right)  =%
{\displaystyle\prod\nolimits_{j=1}^{n-k}}
\left(  \frac{n-j}{n-j+1}\right)  ^{\delta_{\left[  j:n\right]  }},
\]
respectively. Now, it is clear that to derive an estimator to $\mu_{2},$ one
needs to estimate the tail index $\gamma_{1}.$ The general existing method,
which first appeared in \cite{BGDF07} and then developed in \cite{EFG08}, is
to consider any consistent estimator of the extremal index $\gamma$ based on
the $Z$-sample and divide it by the proportion of non-censored observations in
the tail. For instance, \cite{EFG08} adapted Hill's estimator to introduce an
estimator $\widehat{\gamma}_{1}^{(H,c)}:=\widehat{\gamma}^{H}/\widehat{p}$ to
the tail index $\gamma_{1}$ under random right censorship, where%
\[
\widehat{\gamma}^{H}:=\frac{1}{k}\sum_{i=1}^{k}\log\frac{Z_{n-i+1:n}%
}{Z_{n-k:n}}\text{ and }\widehat{p}:=\frac{1}{k}\sum_{i=1}^{k}\delta_{\left[
n-i+1:n\right]  },
\]
with $k=k_{n}$ satisfying $\left(  \ref{k}\right)  ,$ are the classical Hill
estimator and the proportion of upper non-censored observations respectively.
It is proved in \cite{BMN15} that $\widehat{p}$ consistently estimates
$p:=\gamma_{2}/\left(  \gamma_{1}+\gamma_{2}\right)  ,$ therefore
$\widehat{\gamma}_{1}^{(H,c)}$\ consistently estimates $\gamma_{1}=\gamma/p.$
The authors of \cite{BMN15} provide a Gaussian approximation leading to the
asymptotic normality of $\widehat{\gamma}_{1}^{(H,c)}$ by adopting a different
approach from that of \cite{EFG08}, who also showed that $\widehat{\gamma}%
_{1}^{(H,c)}$ is asymptotically normal. Consequently, we obtain%
\begin{equation}
\widehat{\mu}_{2}:=\frac{\widehat{\gamma}_{1}^{(H,c)}}{1-\widehat{\gamma}%
_{1}^{(H,c)}}Z_{n-k:n}%
{\displaystyle\prod\limits_{j=1}^{n-k}}
\left(  \frac{n-j}{n-j+1}\right)  ^{\delta_{\left[  j:n\right]  }},\text{ for
}\widehat{\gamma}_{1}^{(H,c)}<1, \label{mu2}%
\end{equation}
as an estimator to $\mu_{2}.$ Finally, with $\left(  \ref{mu1}\right)  $ and
$\left(  \ref{mu2}\right)  ,$ we construct our estimator $\widehat{\mu}$ of
the mean $\mu:$%
\[
\widehat{\mu}:=\sum_{i=2}^{n-k}\frac{\delta_{\left[  i:n\right]  }}{n-i+1}%
{\displaystyle\prod\limits_{j=1}^{i-1}}
\left(  \frac{n-j}{n-j+1}\right)  ^{\delta_{\left[  j:n\right]  }}Z_{i:n}+%
{\displaystyle\prod\limits_{j=1}^{n-k}}
\left(  \frac{n-j}{n-j+1}\right)  ^{\delta_{\left[  j:n\right]  }}%
\frac{Z_{n-k:n}}{1-\widehat{\gamma}_{1}^{(H,c)}}.
\]
The rest of the paper is organized as follows. In Section \ref{sec2}, we state
our main result which we prove in Section \ref{sec4}. Section \ref{sec3} is
devoted to a simulation study in which we investigate the finite sample
behavior of the newly proposed estimator $\widehat{\mu}.$ Finally, some
results, that are instrumental to our needs, are gathered in the Appendix.

\section{Main results\textbf{\label{sec2}}}

\noindent Our main result, established in the following theorem, consists in
the asymptotic normality of the newly introduced estimator $\widehat{\mu}.$ We
notice that the asymptotic normality of extreme value theory based estimators
is achieved in the second-order framework (see \citeauthor{deHS96},
\citeyear{deHS96}). Thus, it seems quite natural to suppose that cdf's $F$ and
$G$ satisfy the well-known second-order condition of regular variation. That
is, we assume that there exist constants $\tau_{j}<0$ and functions $A_{j},$
$j=1,2$ tending to zero, not changing sign near infinity and having regularly
varying absolute values with indices $\tau_{j},$ such that for any $x>0$%
\begin{equation}%
\begin{array}
[c]{l}%
\underset{t\rightarrow\infty}{\lim}\dfrac{\overline{F}(tx)/\overline
{F}(t)-x^{-1/\gamma_{1}}}{A_{1}(t)}=x^{-1/\gamma_{1}}\dfrac{x^{\tau_{1}%
/\gamma_{1}}-1}{\gamma_{1}\tau_{1}},\medskip\\
\underset{t\rightarrow\infty}{\lim}\dfrac{\overline{G}(tx)/\overline
{G}(t)-x^{-1/\gamma_{2}}}{A_{2}(t)}=x^{-1/\gamma_{2}}\dfrac{x^{\tau_{2}%
/\gamma_{2}}-1}{\gamma_{2}\tau_{2}}.
\end{array}
\label{Condi}%
\end{equation}

\begin{theorem}
\label{Theo}Assume that the second-order conditions of regular variation
$\left(  \ref{Condi}\right)  $ hold with $\gamma_{2}/\left(  1+2\gamma
_{2}\right)  <\gamma_{1}<1.$ Let $k=k_{n}$ be an integer sequence satisfying,
in addition to $\left(  \ref{k}\right)  ,$ $\lim_{n\rightarrow\infty}\sqrt
{k}A_{1}(h)<\infty$ and $\sqrt{k}h\overline{F}\left(  h\right)  \rightarrow
\infty.$ Then there exist finite constants $m$ and $\sigma^{2}>0$ such that%
\[
\frac{\sqrt{k}\left(  \widehat{\mu}-\mu\right)  }{Z_{n-k:n}\overline{F}%
_{n}(Z_{n-k:n})}\overset{d}{\rightarrow}\mathcal{N}\left(  m,\sigma
^{2}\right)  ,\text{ as }n\rightarrow\infty.
\]

\end{theorem}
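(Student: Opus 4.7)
\emph{Plan.} My strategy is to decompose $\widehat{\mu}-\mu=(\widehat{\mu}_{1}-\mu_{1})+(\widehat{\mu}_{2}-\mu_{2})$, divide by $Z_{n-k:n}\overline{F}_{n}(Z_{n-k:n})$ (which behaves like $h\overline{F}(h)$ in probability), and analyze each piece with the appropriate technology: extreme-value tools for the tail, Stute-type tools for the bulk. Since the hypothesis $\sqrt{k}\,h\overline{F}(h)\to\infty$ forces the normalizer $h\overline{F}(h)/\sqrt{k}$ to exceed the parametric rate $n^{-1/2}$, the bulk fluctuations are comparatively small and the limit law is produced by $\widehat{\mu}_{2}-\mu_{2}$ together with a boundary term coming from $Z_{n-k:n}-h$.

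\emph{Step 1 (bulk part).} Using the identity $\widehat{\mu}_{1}=\int_{0}^{Z_{n-k:n}}\overline{F}_{n}(x)\,dx$, I split
\[
\widehat{\mu}_{1}-\mu_{1}=\int_{0}^{h}[\overline{F}_{n}(x)-\overline{F}(x)]\,dx+\int_{h}^{Z_{n-k:n}}\overline{F}_{n}(x)\,dx.
\]
The first integral is controlled by Stute's CLT applied to the bounded weight $\mathbf{1}\{x\le h\}$, whose associated versions of $I_{1},I_{2}$ are finite because the support is truncated at $h$; the resulting $O_{p}(n^{-1/2})$ fluctuation is absorbed by the normalizer. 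The second integral, by monotonicity of $\overline{F}_{n}$ and the consistency $Z_{n-k:n}/h\to 1$, equals $(Z_{n-k:n}-h)\overline{F}(h)(1+o_{p}(1))$ and, after dividing by the normalizer, contributes $\gamma\sqrt{k}(Z_{n-k:n}/h-1)$, which is asymptotically Gaussian by classical EVT on the $Z$-sample.

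\emph{Step 2 (extreme part).} Under $(\ref{Condi})$, Karamata's theorem refines to $\mu_{2}=h\overline{F}(h)\{\gamma_{1}/(1-\gamma_{1})+c\,A_{1}(h)+o(A_{1}(h))\}$ for an explicit constant $c$. I then write
\begin{align*}
\widehat{\mu}_{2}-\mu_{2}&=\tfrac{\gamma_{1}}{1-\gamma_{1}}\bigl[Z_{n-k:n}\overline{F}_{n}(Z_{n-k:n})-h\overline{F}(h)\bigr]\\
&\quad+\Bigl[\tfrac{\widehat{\gamma}_{1}^{(H,c)}}{1-\widehat{\gamma}_{1}^{(H,c)}}-\tfrac{\gamma_{1}}{1-\gamma_{1}}\Bigr]Z_{n-k:n}\overline{F}_{n}(Z_{n-k:n})-c\,A_{1}(h)h\overline{F}(h)+\mathrm{rem.},
\end{align*}
apply a first-order Taylor expansion of $x\mapsto x/(1-x)$ at $\gamma_{1}$ to reduce the middle bracket to $(1-\gamma_{1})^{-2}(\widehat{\gamma}_{1}^{(H,c)}-\gamma_{1})(1+o_{p}(1))$, and invoke the Gaussian approximation of $\widehat{\gamma}_{1}^{(H,c)}$ established in \cite{BMN15} after multiplication by $\sqrt{k}/(h\overline{F}(h))$. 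The condition $\sqrt{k}A_{1}(h)\to\lambda$ then turns the Karamata remainder into a finite deterministic bias contribution to $m$.

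\emph{Main obstacle.} The technical core of the proof is the \emph{joint} asymptotic analysis of the three quantities $\sqrt{k}(Z_{n-k:n}/h-1)$, $\sqrt{k}(\overline{F}_{n}(Z_{n-k:n})/\overline{F}(h)-1)$ and $\sqrt{k}(\widehat{\gamma}_{1}^{(H,c)}-\gamma_{1})$, since they are all linear functionals of the same bivariate empirical process of the $(Z_{i},\delta_{i})$ and only their joint law yields the correct $\sigma^{2}$. I would follow the tail-empirical approach of \cite{BMN15}, representing each as a weighted integral of a single Gaussian process approximating the tail of that empirical process on $[h,\infty)$, and then read off $m$ and $\sigma^{2}$ from the corresponding covariance computations; positivity $\sigma^{2}>0$ follows from the non-degeneracy of this linear combination.
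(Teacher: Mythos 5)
Your overall architecture (split into $\widehat{\mu}_{1}-\mu_{1}$ and $\widehat{\mu}_{2}-\mu_{2}$, Taylor-expand $x\mapsto x/(1-x)$, use the Gaussian approximation of $\widehat{\gamma}_{1}^{(H,c)}$ from \cite{BMN15}, and extract the bias $m$ from the second-order Karamata remainder together with $\sqrt{k}A_{1}(h)\rightarrow\lambda_{1}$) matches the paper, and your closing remark about needing the \emph{joint} law of the tail statistics is on target. But Step 1 contains a genuine error that would produce the wrong limit: you claim that $\int_{0}^{h}[\overline{F}_{n}(x)-\overline{F}(x)]\,dx$ is an $O_{p}(n^{-1/2})$ fluctuation ``absorbed by the normalizer'' because the truncated versions of Stute's integrals $I_{1},I_{2}$ are finite. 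They are finite for each fixed truncation point, but $h=h_{n}\rightarrow\infty$, and in the range $\gamma_{2}/(1+2\gamma_{2})<\gamma_{1}<1$ these truncated integrals diverge as $h\rightarrow\infty$ at exactly the rate $\tfrac{n}{k}h^{2}\overline{F}^{2}(h)=a_{n}^{-2}$. Consequently $\sqrt{k}\int_{0}^{h}(\overline{F}_{n}-\overline{F})\,dx/(h\overline{F}(h))$ is \emph{not} $o_{p}(1)$: it converges to a nondegenerate Gaussian. This is precisely why Stute's theorem fails on $\mathcal{R}$ and why the paper (like \cite{Peng01} for complete data) has to be written at all. The paper handles this term through Cs\"{o}rg\H{o}'s representation of the Kaplan--Meier process \cite{C96} and the weighted approximations $(\ref{Cs})$--$(\ref{Necir})$, producing the terms $L_{n1},L_{n2},L_{n3}$ whose limiting second moments, computed via $(\ref{limits})$, are strictly positive constants such as $2\gamma^{2}\gamma_{1}^{2}/((\gamma_{1}-\gamma+\gamma\gamma_{1})(\gamma_{1}-2\gamma+2\gamma\gamma_{1}))$. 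Dropping this contribution gives an incorrect $\sigma^{2}$ and an incomplete covariance structure, since these bulk terms are built from the same Brownian bridges $\mathbf{B}_{n},\mathbf{B}_{n}^{\ast}$ that drive $\sqrt{k}(Z_{n-k:n}/h-1)$, $\sqrt{k}(\overline{F}_{n}(Z_{n-k:n})/\overline{F}(Z_{n-k:n})-1)$ and $\sqrt{k}(\widehat{\gamma}_{1}^{(H,c)}-\gamma_{1})$.

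Two smaller points. Your heuristic $\int_{h}^{Z_{n-k:n}}\overline{F}_{n}(x)\,dx\approx(Z_{n-k:n}-h)\overline{F}(h)$ is essentially the paper's $T_{n5}$, which after normalization is $(1+o_{p}(1))\sqrt{k}(Z_{n-k:n}/h-1)$; your extra factor $\gamma$ appears only after substituting the approximation $\sqrt{k}(Z_{n-k:n}/h-1)=\gamma\sqrt{n/k}\,\mathbf{B}_{n}^{\ast}(h)+o_{p}(1)$ of \cite{BMN15}. And the second-order refinement of Karamata's theorem you invoke is indeed where the paper's $S_{n6}$ term and the second half of $m$ come from, so that part is sound. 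The essential fix is to replace the appeal to Stute's CLT by a tail-sensitive analysis of the integrated Kaplan--Meier process up to $h_{n}$; that analysis is the technical core of the paper's proof.
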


\begin{remark}
We have%
\[
m:=\dfrac{\lambda_{1}}{\left(  1-p\tau_{1}\right)  \left(  1-\gamma
_{1}\right)  ^{2}}+\dfrac{\lambda_{1}}{\left(  \gamma_{1}+\tau_{1}-1\right)
\left(  1-\gamma_{1}\right)  },
\]
with $\lambda_{1}:=\lim_{n\rightarrow\infty}\sqrt{k}A_{1}(h),$ whereas the
computations of the asymptotic variance $\sigma^{2}$ are very tedious and
result in an expression that is too complicated. However, the lack of a closed
form for $\sigma^{2}$ could be overcome in applications, as both parameters
are usually estimated by the respective sample mean and variance obtained by
bootstrapping $\widehat{\mu}.$
\end{remark}

\section{Simulation study\textbf{\label{sec3}}}

\noindent We carry out a simulation study to illustrate the performance of our
estimator, through two sets of censored and censoring data, both drawn, in the
first part, from Fr\'{e}chet model%
\[
F\left(  x\right)  =\exp\left\{  -x^{-\gamma_{1}}\right\}  ,\text{ }G\left(
x\right)  =\exp\left\{  -x^{-\gamma_{2}}\right\}  ,\text{ }x\geq0,
\]
and, in the second part, from Burr model%
\[
F\left(  x\right)  =1-\left(  1+x^{1/\eta}\right)  ^{-\eta/\gamma_{1}},\text{
}G\left(  x\right)  =1-\left(  1+x^{1/\eta}\right)  ^{-\eta/\gamma_{2}},\text{
}x\geq0,
\]
where $\eta,\gamma_{1},\gamma_{2}>0.$ We fix $\eta=1/4$ and choose the values
$0.3,$ $0.4$ and $0.5$ for $\gamma_{1}.$ For the proportion of the really
observed extreme values, we take $p=0.40,$ $0.50,$ $0.60$ and $0.70.$ For each
couple $\left(  \gamma_{1},p\right)  ,$ we solve the equation $p=\gamma
_{2}/(\gamma_{1}+\gamma_{2})$ to get the pertaining $\gamma_{2}$-value. We
vary the common size $n$ of both samples $\left(  X_{1},...,X_{n}\right)  $
and $\left(  Y_{1},...,Y_{n}\right)  ,$ then for each size, we generate $1000$
independent replicates. Our overall results are taken as the empirical means
of the results obtained through the $1000$ repetitions. To determine the
optimal number (that we denote by $k^{\ast})$ of upper order statistics used
in the computation of $\widehat{\gamma}_{1}^{\left(  H,c\right)  },$ we apply
the algorithm given in page 137 of \cite{RT07}. The performance of the newly
defined estimator $\widehat{\mu}$ is evaluated in terms of absolute bias (abs
bias), mean squared error (mse) and confidence interval (conf int) accuracy
via length and coverage probability (cov prob).%

\begin{table}[tbp] \centering
\begin{tabular}
[c]{ccccccc}\hline
\multicolumn{7}{c}{$\gamma_{1}=0.3\rightarrow\mu=1.298$}\\\hline\hline
\multicolumn{7}{c}{$p=0.40$}\\\hline
$n$ & \multicolumn{1}{|c}{$\widehat{\mu}$} & abs bias & mse & conf int & cov
prob & length\\\hline
$500$ & \multicolumn{1}{|c}{$1.247$} & $0.051$ & $0.021$ & $1.043-1.450$ &
$0.88$ & $0.407$\\\hline
$1000$ & \multicolumn{1}{|c}{$1.244$} & $0.054$ & $0.020$ & $1.099-1.389$ &
$0.88$ & $0.291$\\\hline
$1500$ & \multicolumn{1}{|c}{$1.233$} & $0.065$ & $0.005$ & $1.119-1.346$ &
$0.80$ & $0.227$\\\hline
$2000$ & \multicolumn{1}{|c}{$1.231$} & $0.067$ & $0.005$ & $1.135-1.328$ &
$0.74$ & $0.193$\\\hline
\multicolumn{7}{c}{$p=0.50$}\\\hline
$500$ & \multicolumn{1}{|c}{$1.248$} & $0.050$ & $0.008$ & $1.049-1.447$ &
$0.96$ & $0.399$\\\hline
$1000$ & \multicolumn{1}{|c}{$1.247$} & $0.051$ & $0.004$ & $1.107-1.387$ &
$0.90$ & $0.280$\\\hline
$1500$ & \multicolumn{1}{|c}{$1.250$} & $0.048$ & $0.003$ & $1.134-1.365$ &
$0.90$ & $0.231$\\\hline
$2000$ & \multicolumn{1}{|c}{$1.248$} & $0.050$ & $0.003$ & $1.146-1.350$ &
$0.86$ & $0.204$\\\hline
\multicolumn{7}{c}{$p=0.60$}\\\hline
$500$ & \multicolumn{1}{|c}{$1.254$} & $0.044$ & $0.009$ & $1.050-1.458$ &
$0.90$ & $0.408$\\\hline
$1000$ & \multicolumn{1}{|c}{$1.257$} & $0.041$ & $0.003$ & $1.119-1.395$ &
$0.94$ & $0.275$\\\hline
$1500$ & \multicolumn{1}{|c}{$1.266$} & $0.032$ & $0.002$ & $1.153-1.379$ &
$0.96$ & $0.226$\\\hline
$2000$ & \multicolumn{1}{|c}{$1.264$} & $0.034$ & $0.002$ & $1.164-1.364$ &
$0.92$ & $0.200$\\\hline
\multicolumn{7}{c}{$p=0.70$}\\\hline
$500$ & \multicolumn{1}{|c}{$1.265$} & $0.033$ & $0.003$ & $1.069-1.460$ &
$0.97$ & $0.391$\\\hline
$1000$ & \multicolumn{1}{|c}{$1.269$} & $0.029$ & $0.002$ & $1.123-1.415$ &
$0.96$ & $0.291$\\\hline
$1500$ & \multicolumn{1}{|c}{$1.279$} & $0.019$ & $0.001$ & $1.162-1.395$ &
$0.98$ & $0.233$\\\hline
$2000$ & \multicolumn{1}{|c}{$1.278$} & $0.020$ & $0.001$ & $1.178-1.377$ &
$0.96$ & $0.199$\\\hline
&  &  &  &  &  &
\end{tabular}
\caption{Absolute bias, mean squared error and 95\%-confidence interval accuracy of the mean estimator based on 1000 right-censored samples from Fréchet model with shape parameter 0.3}\label{Tab1}%
\end{table}%
%

\begin{table}[tbp] \centering
\begin{tabular}
[c]{ccccccc}\hline
\multicolumn{7}{c}{$\gamma_{1}=0.4\rightarrow\mu=$1.489}\\\hline\hline
\multicolumn{7}{c}{$p=0.40$}\\\hline
$n$ & \multicolumn{1}{|c}{$\widehat{\mu}$} & abs bias & mse & conf int & cov
prob & length\\\hline
\multicolumn{1}{r}{$500$} & \multicolumn{1}{|c}{$1.370$} & $0.120$ & $0.074$ &
$1.147-1.593$ & $0.71$ & $0.446$\\\hline
\multicolumn{1}{r}{$1000$} & \multicolumn{1}{|c}{$1.377$} & $0.112$ & $0.048$
& $1.217-1.536$ & $0.57$ & $0.319$\\\hline
\multicolumn{1}{r}{$1500$} & \multicolumn{1}{|c}{$1.367$} & $0.122$ & $0.019$
& $1.241-1.493$ & $0.48$ & $0.252$\\\hline
\multicolumn{1}{r}{$2000$} & \multicolumn{1}{|c}{$1.363$} & $0.126$ & $0.018$
& $1.256-1.470$ & $0.36$ & $0.214$\\\hline
\multicolumn{7}{c}{$p=0.50$}\\\hline
\multicolumn{1}{r}{$500$} & \multicolumn{1}{|c}{$1.396$} & $0.093$ & $0.027$ &
$1.169-1.624$ & $0.81$ & $0.455$\\\hline
\multicolumn{1}{r}{$1000$} & \multicolumn{1}{|c}{$1.394$} & $0.095$ & $0.018$
& $1.237-1.551$ & $0.66$ & $0.313$\\\hline
\multicolumn{1}{r}{$1500$} & \multicolumn{1}{|c}{$1.392$} & $0.097$ & $0.012$
& $1.264-1.521$ & $0.65$ & $0.257$\\\hline
\multicolumn{1}{r}{$2000$} & \multicolumn{1}{|c}{$1.389$} & $0.101$ & $0.012$
& $1.275-1.502$ & $0.55$ & $0.227$\\\hline
\multicolumn{7}{c}{$p=0.60$}\\\hline
\multicolumn{1}{r}{$500$} & \multicolumn{1}{|c}{$1.407$} & $0.082$ & $0.013$ &
$1.189-1.625$ & $0.89$ & $0.436$\\\hline
\multicolumn{1}{r}{$1000$} & \multicolumn{1}{|c}{$1.405$} & $0.084$ & $0.010$
& $1.251-1.559$ & $0.77$ & $0.308$\\\hline
\multicolumn{1}{r}{$1500$} & \multicolumn{1}{|c}{$1.419$} & $0.070$ & $0.007$
& $1.292-1.546$ & $0.84$ & $0.254$\\\hline
\multicolumn{1}{r}{$2000$} & \multicolumn{1}{|c}{$1.418$} & $0.071$ & $0.007$
& $1.308-1.529$ & $0.71$ & $0.222$\\\hline
\multicolumn{7}{c}{$p=0.70$}\\\hline
\multicolumn{1}{r}{$500$} & \multicolumn{1}{|c}{$1.420$} & $0.069$ & $0.010$ &
$1.199-1.641$ & $0.92$ & $0.442$\\\hline
\multicolumn{1}{r}{$1000$} & \multicolumn{1}{|c}{$1.433$} & $0.056$ & $0.006$
& $1.273-1.593$ & $0.86$ & $0.320$\\\hline
\multicolumn{1}{r}{$1500$} & \multicolumn{1}{|c}{$1.443$} & $0.046$ & $0.004$
& $1.312-1.575$ & $0.90$ & $0.263$\\\hline
\multicolumn{1}{r}{$2000$} & \multicolumn{1}{|c}{$1.442$} & $0.047$ & $0.004$
& $1.329-1.554$ & $0.89$ & $0.226$\\\hline
&  &  &  &  &  &
\end{tabular}
\caption{Absolute bias, mean squared error and 95\%-confidence interval accuracy of the mean estimator based on 1000 right-censored samples from Fréchet model with shape parameter 0.4}.\label{Tab2}%
\end{table}%
%

\begin{table}[tbp] \centering
\begin{tabular}
[c]{ccccccc}\hline
\multicolumn{7}{c}{$\gamma_{1}=0.5\rightarrow\mu=1.772$}\\\hline\hline
\multicolumn{7}{c}{$p=0.40$}\\\hline
$n$ & \multicolumn{1}{|c}{$\widehat{\mu}$} & abs bias & mse & conf int & cov
prob & length\\\hline
\multicolumn{1}{r}{$500$} & \multicolumn{1}{|c}{$1.566$} & $0.206$ & $0.398$ &
$1.262-1.870$ & $0.52$ & $0.608$\\\hline
\multicolumn{1}{r}{$1000$} & \multicolumn{1}{|c}{$1.550$} & $0.223$ & $0.176$
& $1.372-1.727$ & $0.28$ & $0.355$\\\hline
\multicolumn{1}{r}{$1500$} & \multicolumn{1}{|c}{$1.559$} & $0.214$ & $0.064$
& $1.415-1.703$ & $0.20$ & $0.289$\\\hline
\multicolumn{1}{r}{$2000$} & \multicolumn{1}{|c}{$1.549$} & $0.224$ & $0.061$
& $1.426-1.671$ & $0.13$ & $0.245$\\\hline
\multicolumn{7}{c}{$p=0.50$}\\\hline
\multicolumn{1}{r}{$500$} & \multicolumn{1}{|c}{$1.577$} & $0.195$ & $0.180$ &
$1.309-1.846$ & $0.53$ & $0.537$\\\hline
\multicolumn{1}{r}{$1000$} & \multicolumn{1}{|c}{$1.573$} & $0.199$ & $0.139$
& $1.386-1.761$ & $0.37$ & $0.375$\\\hline
\multicolumn{1}{r}{$1500$} & \multicolumn{1}{|c}{$1.578$} & $0.195$ & $0.051$
& $1.430-1.725$ & $0.20$ & $0.294$\\\hline
\multicolumn{1}{r}{$2000$} & \multicolumn{1}{|c}{$1.576$} & $0.196$ & $0.044$
& $1.447-1.706$ & $0.22$ & $0.259$\\\hline
\multicolumn{7}{c}{$p=0.60$}\\\hline
\multicolumn{1}{r}{$500$} & \multicolumn{1}{|c}{$1.626$} & $0.147$ & $0.128$ &
$1.362-1.889$ & $0.65$ & $0.527$\\\hline
\multicolumn{1}{r}{$1000$} & \multicolumn{1}{|c}{$1.617$} & $0.155$ & $0.034$
& $1.430-1.805$ & $0.56$ & $0.375$\\\hline
\multicolumn{1}{r}{$1500$} & \multicolumn{1}{|c}{$1.606$} & $0.166$ & $0.033$
& $1.465-1.747$ & $0.34$ & $0.282$\\\hline
\multicolumn{1}{r}{$2000$} & \multicolumn{1}{|c}{$1.622$} & $0.150$ & $0.029$
& $1.494-1.751$ & $0.34$ & $0.258$\\\hline
\multicolumn{7}{c}{$p=0.70$}\\\hline
\multicolumn{1}{r}{$500$} & \multicolumn{1}{|c}{$1.632$} & $0.141$ & $0.046$ &
$1.375-1.888$ & $0.72$ & $0.513$\\\hline
\multicolumn{1}{r}{$1000$} & \multicolumn{1}{|c}{$1.646$} & $0.126$ & $0.024$
& $1.459-1.833$ & $0.70$ & $0.370$\\\hline
\multicolumn{1}{r}{$1500$} & \multicolumn{1}{|c}{$1.668$} & $0.104$ & $0.017$
& $1.516-1.821$ & $0.68$ & $0.305$\\\hline
\multicolumn{1}{r}{$2000$} & \multicolumn{1}{|c}{$1.666$} & $0.107$ & $0.016$
& $1.535-1.797$ & $0.57$ & $0.262$\\\hline
&  &  &  &  &  &
\end{tabular}
\caption{Absolute bias, mean squared error and 95\%-confidence interval accuracy of the mean estimator based on 1000 right-censored samples from Fréchet model with shape parameter 0.5}.\label{Tab3}%
\end{table}%
%

\begin{table}[tbp] \centering
\begin{tabular}
[c]{ccccccc}\hline
\multicolumn{7}{c}{$\gamma_{1}=0.3\rightarrow\mu=1.228$}\\\hline\hline
\multicolumn{7}{c}{$p=0.40$}\\\hline
$n$ & \multicolumn{1}{|c}{$\widehat{\mu}$} & abs bias & mse & conf int & cov
prob & length\\\hline
$500$ & \multicolumn{1}{|c}{$1.186$} & $0.042$ & $0.077$ & $0.972-1.399$ &
$0.90$ & $0.428$\\\hline
$1000$ & \multicolumn{1}{|c}{$1.179$} & $0.049$ & $0.019$ & $1.038-1.32$ &
$0.80$ & $0.282$\\\hline
$1500$ & \multicolumn{1}{|c}{$1.163$} & $0.064$ & $0.005$ & $1.053-1.273$ &
$0.80$ & $0.220$\\\hline
$2000$ & \multicolumn{1}{|c}{$1.164$} & $0.063$ & $0.005$ & $1.068-1.261$ &
$0.72$ & $0.193$\\\hline
\multicolumn{7}{c}{$p=0.50$}\\\hline
\multicolumn{1}{r}{$500$} & \multicolumn{1}{|c}{$1.186$} & $0.042$ & $0.009$ &
$0.991-1.380$ & $0.94$ & $0.388$\\\hline
\multicolumn{1}{r}{$1000$} & \multicolumn{1}{|c}{$1.173$} & $0.054$ & $0.004$
& $1.039-1.308$ & $0.93$ & $0.269$\\\hline
\multicolumn{1}{r}{$1500$} & \multicolumn{1}{|c}{$1.068$} & $0.047$ & $0.003$
& $1.180-1.292$ & $0.88$ & $0.224$\\\hline
\multicolumn{1}{r}{$2000$} & \multicolumn{1}{|c}{$1.181$} & $0.046$ & $0.003$
& $1.086-1.276$ & $0.86$ & $0.190$\\\hline
\multicolumn{7}{c}{$p=0.60$}\\\hline
\multicolumn{1}{r}{$500$} & \multicolumn{1}{|c}{$1.184$} & $0.043$ & $0.004$ &
$0.997-1.371$ & $0.95$ & $0.374$\\\hline
\multicolumn{1}{r}{$1000$} & \multicolumn{1}{|c}{$1.192$} & $0.036$ & $0.002$
& $1.058-1.326$ & $0.96$ & $0.268$\\\hline
\multicolumn{1}{r}{$1500$} & \multicolumn{1}{|c}{$1.196$} & $0.031$ & $0.002$
& $1.088-1.305$ & $0.96$ & $0.217$\\\hline
\multicolumn{1}{r}{$2000$} & \multicolumn{1}{|c}{$1.194$} & $0.034$ & $0.002$
& $1.099-1.288$ & $0.92$ & $0.190$\\\hline
\multicolumn{7}{c}{$p=0.70$}\\\hline
\multicolumn{1}{r}{$500$} & \multicolumn{1}{|c}{$1.198$} & $0.029$ & $0.003$ &
$1.012-1.384$ & $0.97$ & $0.373$\\\hline
\multicolumn{1}{r}{$1000$} & \multicolumn{1}{|c}{$1.200$} & $0.028$ & $0.001$
& $1.066-1.334$ & $0.98$ & $0.269$\\\hline
\multicolumn{1}{r}{$1500$} & \multicolumn{1}{|c}{$1.208$} & $0.020$ & $0.001$
& $1.098-1.317$ & $0.98$ & $0.219$\\\hline
\multicolumn{1}{r}{$2000$} & \multicolumn{1}{|c}{$1.207$} & $0.021$ & $0.001$
& $1.113-1.301$ & $0.98$ & $0.188$\\\hline
&  &  &  &  &  &
\end{tabular}
\caption{Absolute bias, mean squared error and 95\%-confidence interval accuracy of the mean estimator based on 1000 right-censored samples from Burr model with shape parameter 0.3}\label{Tab4}%
\end{table}%
%

\begin{table}[tbp] \centering
\begin{tabular}
[c]{ccccccc}\hline
\multicolumn{7}{c}{$\gamma_{1}=0.4\rightarrow\mu=1.498$}\\\hline\hline
\multicolumn{7}{c}{$p=0.40$}\\\hline
$n$ & \multicolumn{1}{|c}{$\widehat{\mu}$} & abs bias & mse & conf int & cov
prob & length\\\hline
\multicolumn{1}{r}{$500$} & \multicolumn{1}{|c}{$1.426$} & $0.071$ & $0.093$ &
$1.193-1.660$ & $0.76$ & $0.466$\\\hline
\multicolumn{1}{r}{$1000$} & \multicolumn{1}{|c}{$1.388$} & $0.110$ & $0.033$
& $1.224-1.551$ & $0.58$ & $0.327$\\\hline
\multicolumn{1}{r}{$1500$} & \multicolumn{1}{|c}{$1.374$} & $0.124$ & $0.020$
& $1.248-1.499$ & $0.44$ & $0.252$\\\hline
\multicolumn{1}{r}{$2000$} & \multicolumn{1}{|c}{$1.374$} & $0.123$ & $0.019$
& $1.268-1.480$ & $0.29$ & $0.212$\\\hline
\multicolumn{7}{c}{$p=0.50$}\\\hline
\multicolumn{1}{r}{$500$} & \multicolumn{1}{|c}{$1.402$} & $0.096$ & $0.047$ &
$1.176-1.627$ & $0.80$ & $0.451$\\\hline
\multicolumn{1}{r}{$1000$} & \multicolumn{1}{|c}{$1.389$} & $0.109$ & $0.017$
& $1.231-1.546$ & $0.64$ & $0.316$\\\hline
\multicolumn{1}{r}{$1500$} & \multicolumn{1}{|c}{$1.401$} & $0.097$ & $0.012$
& $1.272-1.530$ & $0.66$ & $0.258$\\\hline
\multicolumn{1}{r}{$2000$} & \multicolumn{1}{|c}{$1.402$} & $0.096$ & $0.011$
& $1.292-1.511$ & $0.53$ & $0.219$\\\hline
\multicolumn{7}{c}{$p=0.60$}\\\hline
\multicolumn{1}{r}{$500$} & \multicolumn{1}{|c}{$1.422$} & $0.076$ & $0.043$ &
$1.186-1.657$ & $0.85$ & $0.471$\\\hline
\multicolumn{1}{r}{$1000$} & \multicolumn{1}{|c}{$1.421$} & $0.077$ & $0.009$
& $1.261-1.581$ & $0.86$ & $0.320$\\\hline
\multicolumn{1}{r}{$1500$} & \multicolumn{1}{|c}{$1.429$} & $0.069$ & $0.007$
& $1.302-1.556$ & $0.80$ & $0.254$\\\hline
\multicolumn{1}{r}{$2000$} & \multicolumn{1}{|c}{$1.427$} & $0.071$ & $0.006$
& $1.316-1.538$ & $0.76$ & $0.223$\\\hline
\multicolumn{7}{c}{$p=0.70$}\\\hline
\multicolumn{1}{r}{$500$} & \multicolumn{1}{|c}{$1.436$} & $0.061$ & $0.009$ &
$1.214-1.658$ & $0.94$ & $0.444$\\\hline
\multicolumn{1}{r}{$1000$} & \multicolumn{1}{|c}{$1.441$} & $0.057$ & $0.006$
& $1.285-1.597$ & $0.92$ & $0.312$\\\hline
\multicolumn{1}{r}{$1500$} & \multicolumn{1}{|c}{$1.451$} & $0.047$ & $0.004$
& $1.322-1.580$ & $0.91$ & $0.259$\\\hline
\multicolumn{1}{r}{$2000$} & \multicolumn{1}{|c}{$1.449$} & $0.049$ & $0.004$
& $1.340-1.558$ & $0.88$ & $0.218$\\\hline
&  &  &  &  &  &
\end{tabular}
\caption{Absolute bias, mean squared error and 95\%-confidence interval accuracy of the mean estimator based on 1000 right-censored samples from Burr model with shape parameter 0.4}.\label{Tab5}%
\end{table}%
%

\begin{table}[tbp] \centering
\begin{tabular}
[c]{ccccccc}\hline
\multicolumn{7}{c}{$\gamma_{1}=0.5\rightarrow\mu=1.854$}\\\hline\hline
\multicolumn{7}{c}{$p=0.40$}\\\hline
$n$ & \multicolumn{1}{|c}{$\widehat{\mu}$} & abs bias & mse & conf int & cov
prob & length\\\hline
$500$ & \multicolumn{1}{|c}{$1.654$} & $0.200$ & $0.760$ & $1.330-1.978$ &
$0.50$ & $0.649$\\\hline
$1000$ & \multicolumn{1}{|c}{$1.648$} & $0.206$ & $0.114$ & $1.460-1.836$ &
$0.26$ & $0.375$\\\hline
$1500$ & \multicolumn{1}{|c}{$1.630$} & $0.224$ & $0.098$ & $1.478-1.782$ &
$0.14$ & $0.304$\\\hline
$2000$ & \multicolumn{1}{|c}{$1.621$} & $0.233$ & $0.090$ & $1.491-1.752$ &
$0.14$ & $0.260$\\\hline
\multicolumn{7}{c}{$p=0.50$}\\\hline
$500$ & \multicolumn{1}{|c}{$1.603$} & $0.252$ & $0.554$ & $1.253-1.952$ &
$0.67$ & $0.700$\\\hline
$1000$ & \multicolumn{1}{|c}{$1.658$} & $0.196$ & $0.090$ & $1.470-1.847$ &
$0.34$ & $0.378$\\\hline
$1500$ & \multicolumn{1}{|c}{$1.653$} & $0.202$ & $0.049$ & $1.501-1.804$ &
$0.25$ & $0.303$\\\hline
$2000$ & \multicolumn{1}{|c}{$1.656$} & $0.198$ & $0.045$ & $1.530-1.782$ &
$0.22$ & $0.252$\\\hline
\multicolumn{7}{c}{$p=0.60$}\\\hline
$500$ & \multicolumn{1}{|c}{$1.688$} & $0.166$ & $0.066$ & $1.417-1.959$ &
$0.67$ & $0.542$\\\hline
$1000$ & \multicolumn{1}{|c}{$1.693$} & $0.161$ & $0.036$ & $1.508-1.879$ &
$0.54$ & $0.371$\\\hline
$1500$ & \multicolumn{1}{|c}{$1.695$} & $0.159$ & $0.031$ & $1.544-1.846$ &
$0.39$ & $0.301$\\\hline
$2000$ & \multicolumn{1}{|c}{$1.705$} & $0.149$ & $0.027$ & $1.576-1.834$ &
$0.34$ & $0.258$\\\hline
\multicolumn{7}{c}{$p=0.70$}\\\hline
$500$ & \multicolumn{1}{|c}{$1.737$} & $0.117$ & $0.060$ & $1.462-2.012$ &
$0.77$ & $0.550$\\\hline
$1000$ & \multicolumn{1}{|c}{$1.737$} & $0.117$ & $0.036$ & $1.547-1.927$ &
$0.74$ & $0.380$\\\hline
$1500$ & \multicolumn{1}{|c}{$1.749$} & $0.105$ & $0.016$ & $1.593-1.904$ &
$0.70$ & $0.311$\\\hline
$2000$ & \multicolumn{1}{|c}{$1.753$} & $0.101$ & $0.014$ & $1.621-1.885$ &
$0.60$ & $0.264$\\\hline
&  &  &  &  &  &
\end{tabular}
\caption{Absolute bias, mean squared error and 95\%-confidence interval accuracy of the mean estimator based on 1000 right-censored samples from Burr model with shape parameter 0.5}.\label{Tab6}%
\end{table}%

\noindent The results, summarized in Tables \ref{Tab1}, \ref{Tab2} and
\ref{Tab3} for Fr\'{e}chet model and Table \ref{Tab4}, \ref{Tab5} and
\ref{Tab6} for Burr distribution, show that the same conclusions might be
drawn in both cases. As expected, the sample size influences the estimation in
the sense that the larger $n$ gets, the better the estimation is. On the other
hand, it is clear that the estimation accuracy increases when the censoring
percentage decreases, which seems logical. Moreover, the estimator performs
best for the smaller value of the tail index, as we can see from Tables
\ref{Tab1} and \ref{Tab4}. Finally, many simulations realized with extreme
value indices larger than $0.5,$ but whose results are not reported here, show
that the estimator behaves poorly especially when the censorship proportion is high.

\section{Proofs\textbf{\label{sec4}}}

\noindent We begin by a brief introduction on some uniform empirical processes
under random censoring. The empirical counterparts of $\ H^{\left(  j\right)
}$ $(j=0,1)$ are defined, for $v\geq0,$ by%
\[
H_{n}^{\left(  j\right)  }(v):=\frac{1}{n}\sum_{i=1}^{n}\mathbf{1}\left\{
Z_{i}\leq v,\delta_{i}=j\right\}  ,\text{ }j=0,1.
\]
In the sequel, we will use the following two empirical processes%
\[
\sqrt{n}\left(  \overline{H}_{n}^{\left(  j\right)  }(v)-\overline{H}^{\left(
j\right)  }(v)\right)  ,\text{ }j=0,1;\text{ }v\geq0,
\]
which may be represented, almost surely, by a uniform empirical process.
Indeed, let us define, for each $i=1,...,n,$ the following rv%
\[
U_{i}:=\delta_{i}H^{\left(  1\right)  }(Z_{i})+(1-\delta_{i})(\theta
+H^{\left(  0\right)  }(Z_{i})).
\]
From \cite{EK92}, the rv's $U_{1},...,U_{n}$ are iid $(0,1)$-uniform. The
empirical cdf and the uniform empirical process based upon $U_{1},...,U_{n}$
are respectively denoted by%
\[
\mathbb{U}_{n}(s):\mathbb{=}\frac{1}{n}\sum_{i=1}^{n}\mathbf{1}\left\{
U_{i}\leq s\right\}  \text{ and }\alpha_{n}(s):=\sqrt{n}(\mathbb{U}%
_{n}(s)-s),\text{ }0\leq s\leq1.
\]
\cite{DE96} state that almost surely%
\[
H_{n}^{\left(  0\right)  }(v)=\mathbb{U}_{n}(H^{\left(  0\right)  }%
(v)+\theta)-\mathbb{U}_{n}(\theta),\text{ for }0<H^{\left(  0\right)
}(v)<1-\theta,
\]
and%
\[
H_{n}^{\left(  1\right)  }(v)=\mathbb{U}_{n}(H^{\left(  1\right)  }(v)),\text{
for }0<H^{\left(  1\right)  }(v)<\theta.
\]
It is easy to verify that almost surely%
\begin{equation}
\beta_{n}\left(  v\right)  :=\sqrt{n}\left(  \overline{H}_{n}^{\left(
1\right)  }(v)-\overline{H}^{\left(  1\right)  }(v)\right)  =\alpha_{n}\left(
\theta\right)  -\alpha_{n}\left(  \theta-\overline{H}^{\left(  1\right)
}(v)\right)  ,\text{ for }0<\overline{H}^{\left(  1\right)  }(v)<\theta,
\label{rep-H1}%
\end{equation}
and%
\begin{equation}
\widetilde{\beta}_{n}\left(  v\right)  :=\sqrt{n}\left(  \overline{H}%
_{n}^{\left(  0\right)  }(v)-\overline{H}^{\left(  0\right)  }(v)\right)
=-\alpha_{n}\left(  1-\overline{H}^{\left(  0\right)  }(v)\right)  ,\text{ for
}0<\overline{H}^{\left(  0\right)  }(v)<1-\theta. \label{rep-H0}%
\end{equation}
Our methodology strongly relies on the well-known Gaussian approximation given
in Corollary 2.1 by \cite{CHM86}. It says that: on the probability space
$(\Omega,\mathcal{A},\mathbb{P)},$ there exists a sequence of Brownian bridges
$\left\{  B_{n}(s);\text{ }0\leq s\leq1\right\}  $ such that for every
$0\leq\zeta<1/4,$%
\begin{equation}
\underset{1/n\leq s\leq1}{\sup}\frac{n^{\zeta}\left\vert \alpha_{n}%
(1-s)-B_{n}(1-s)\right\vert }{s^{1/2-\zeta}}=O_{\mathbb{P}}(1). \label{Cs}%
\end{equation}
For the increments $\alpha_{n}(\theta)-\alpha_{n}(\theta-s),$ we will need an
approximation of the same type as $\left(  \ref{Cs}\right)  $. Following
similar arguments, mutatis mutandis, as those used in the proofs of assertions
(2.2) of Theorem 2.1 and (2.8) of Theorem 2.2 in \cite{CHM86}, we may show
that, for every $0<\theta<1$ and $0\leq\zeta<1/4,$ we have%
\begin{equation}
\underset{1/n\leq s\leq\theta}{\sup}\frac{n^{\zeta}\left\vert \left\{
\alpha_{n}(\theta)-\alpha_{n}(\theta-s)\right\}  -\left\{  B_{n}\left(
\theta\right)  -B_{n}(\theta-s)\right\}  \right\vert }{s^{1/2-\zeta}%
}=O_{\mathbb{P}}(1). \label{Necir}%
\end{equation}

\subsection{Proof of Theorem \ref{Theo}}

\noindent Observe that $\widehat{\mu}-\mu=\left(  \widehat{\mu}_{1}-\mu
_{1}\right)  +\left(  \widehat{\mu}_{2}-\mu_{2}\right)  ,$ where%
\[
\widehat{\mu}_{1}-\mu_{1}=\int_{0}^{Z_{n-k:n}}\overline{F}_{n}(x)dx-\int
_{0}^{h}\overline{F}(x)dx,
\]
and%
\[
\widehat{\mu}_{2}-\mu_{2}=%
{\displaystyle\prod\limits_{j=1}^{n-k}}
\left(  1-\frac{\delta_{\left[  j:n\right]  }}{n-j+1}\right)  \frac
{\widehat{\gamma}_{1}^{(H,c)}}{1-\widehat{\gamma}_{1}^{(H,c)}}Z_{n-k:n}%
-\int_{h}^{\infty}\overline{F}(x)dx.
\]
It is clear that%
\[
\widehat{\mu}_{1}-\mu_{1}=\int_{0}^{Z_{n-k:n}}(\overline{F}_{n}(x)-\overline
{F}(x))dx-\int_{Z_{n-k:n}}^{h}\overline{F}(x)dx.
\]
In view of Proposition 5 combined with equation $\left(  4.9\right)  $ in
\cite{C96}, we have for any $x\leq Z_{n-k:n},$%
\begin{align*}
&  \frac{\overline{F}_{n}(x)-\overline{F}\left(  x\right)  }{\overline
{F}\left(  x\right)  }=\\
&  \int_{0}^{x}\frac{d\left(  \overline{H}_{n}^{\left(  1\right)  }\left(
v\right)  -\overline{H}^{\left(  1\right)  }\left(  v\right)  \right)
}{\overline{H}\left(  v\right)  }-\int_{0}^{x}\frac{\overline{H}_{n}\left(
v\right)  -\overline{H}\left(  v\right)  }{\overline{H}^{2}\left(  v\right)
}d\overline{H}^{\left(  1\right)  }\left(  v\right)  +O_{\mathbb{P}}\left(
1/k\right)  .
\end{align*}
Integrating the first integral by parts yields%
\begin{align*}
&  \frac{\overline{F}_{n}(x)-\overline{F}\left(  x\right)  }{\overline
{F}\left(  x\right)  }=\frac{\overline{H}_{n}^{\left(  1\right)  }\left(
x\right)  -\overline{H}^{\left(  1\right)  }\left(  x\right)  }{\overline
{H}\left(  x\right)  }-\left(  \overline{H}_{n}^{\left(  1\right)  }\left(
0\right)  -\overline{H}^{\left(  1\right)  }\left(  0\right)  \right) \\
&  +\int_{0}^{x}\frac{\overline{H}_{n}^{\left(  1\right)  }\left(  v\right)
-\overline{H}^{\left(  1\right)  }\left(  v\right)  }{\overline{H}^{2}\left(
v\right)  }d\overline{H}\left(  v\right)  -\int_{0}^{x}\frac{\overline{H}%
_{n}\left(  v\right)  -\overline{H}\left(  v\right)  }{\overline{H}^{2}\left(
v\right)  }d\overline{H}^{\left(  1\right)  }\left(  v\right)  +O_{\mathbb{P}%
}\left(  1/k\right)  .
\end{align*}
Recall that%
\[
\sqrt{n}\left(  \overline{H}_{n}\left(  v\right)  -\overline{H}\left(
v\right)  \right)  =\sqrt{n}\left(  \overline{H}_{n}^{\left(  1\right)
}\left(  v\right)  -\overline{H}^{\left(  1\right)  }\left(  v\right)
\right)  +\sqrt{n}\left(  \overline{H}_{n}^{\left(  0\right)  }\left(
v\right)  -\overline{H}^{\left(  0\right)  }\left(  v\right)  \right)  ,
\]
which by representations $\left(  \ref{rep-H1}\right)  $ and $\left(
\ref{rep-H0}\right)  $ becomes%
\[
\sqrt{n}\left(  \overline{H}_{n}\left(  v\right)  -\overline{H}\left(
v\right)  \right)  =\alpha_{n}\left(  \theta\right)  -\alpha_{n}\left(
\theta-\overline{H}^{\left(  1\right)  }\left(  v\right)  \right)  -\alpha
_{n}\left(  1-\overline{H}^{\left(  0\right)  }\left(  v\right)  \right)  .
\]
Furthermore, from the classical central limit theorem, we have $\overline
{H}_{n}^{\left(  1\right)  }\left(  0\right)  -\overline{H}^{\left(  1\right)
}\left(  0\right)  =O_{\mathbb{P}}\left(  n^{-1/2}\right)  .$ Therefore, we
have%
\begin{align}
&  \frac{\overline{F}_{n}(x)-\overline{F}\left(  x\right)  }{\overline
{F}\left(  x\right)  }=\frac{1}{\sqrt{n}}\frac{\beta_{n}\left(  x\right)
}{\overline{H}\left(  x\right)  }+\frac{1}{\sqrt{n}}\int_{0}^{x}\frac
{\beta_{n}\left(  v\right)  }{\overline{H}^{2}\left(  v\right)  }d\overline
{H}\left(  v\right) \label{ratio}\\
&  -\frac{1}{\sqrt{n}}\int_{0}^{x}\frac{\beta_{n}\left(  v\right)
+\widetilde{\beta}_{n}\left(  v\right)  }{\overline{H}^{2}\left(  v\right)
}d\overline{H}^{\left(  1\right)  }\left(  v\right)  +O_{\mathbb{P}}\left(
1/k\right)  +O_{\mathbb{P}}\left(  1/\sqrt{n}\right)  .\nonumber
\end{align}
By letting $a_{n}:=\left(  k/n\right)  ^{1/2}/\left(  h\overline{F}(h)\right)
,$ it is easy to verify that%
\[
\frac{\sqrt{k}\left(  \widehat{\mu}_{1}-\mu_{1}\right)  }{h\overline{F}(h)}=%
{\displaystyle\sum_{i=1}^{6}}
T_{ni},
\]
where%
\[%
\begin{array}
[c]{cl}%
T_{n1}:= & a_{n}%
{\displaystyle\int_{0}^{Z_{n-k:n}}}
\dfrac{\beta_{n}\left(  x\right)  }{\overline{H}(x)}\overline{F}%
(x)dx,\bigskip\\
T_{n2}:= & a_{n}%
{\displaystyle\int_{0}^{Z_{n-k:n}}}
\left\{
{\displaystyle\int_{0}^{x}}
\dfrac{\beta_{n}\left(  v\right)  }{\overline{H}^{2}(v)}d\overline
{H}(v)\right\}  \overline{F}(x)dx,\bigskip\\
T_{n3}:= & -a_{n}%
{\displaystyle\int_{0}^{Z_{n-k:n}}}
\left\{
{\displaystyle\int_{0}^{x}}
\dfrac{\beta_{n}\left(  v\right)  +\widetilde{\beta}_{n}\left(  v\right)
}{\overline{H}^{2}(v)}d\overline{H}^{\left(  1\right)  }(v)\right\}
\overline{F}(x)dx,\bigskip\\
T_{n4}:= & a_{n}O_{\mathbb{P}}\left(  \sqrt{n}/k\right)
{\displaystyle\int_{0}^{Z_{n-k:n}}}
\overline{F}(x)dx,\bigskip\\
T_{n5}:= & -a_{n}\sqrt{n}%
{\displaystyle\int_{Z_{n-k:n}}^{h}}
\overline{F}(x)dx\ \ \text{and \ }T_{n6}:=O_{\mathbb{P}}\left(  a_{n}\right)
.
\end{array}
\]
By using the Gaussian approximation $\left(  \ref{Necir}\right)  ,$ we obtain%
\begin{align*}
T_{n1}  &  =a_{n}%
{\displaystyle\int_{0}^{Z_{n-k:n}}}
\dfrac{\overline{F}(x)}{\overline{H}(x)}\mathbf{B}_{n}\left(  x\right)  dx\\
&  +o_{\mathbb{P}}\left(  1\right)  a_{n}%
{\displaystyle\int_{0}^{Z_{n-k:n}}}
\dfrac{\left(  \overline{H}^{\left(  1\right)  }(x)\right)  ^{1/2}}%
{\overline{H}(x)}\overline{F}(x)dx,
\end{align*}
where%
\begin{equation}
\mathbf{B}_{n}\left(  x\right)  :=B_{n}(\theta)-B_{n}\left(  \theta
-\overline{H}^{\left(  1\right)  }\left(  x\right)  \right)  ,\text{ for
}0<\overline{H}^{\left(  1\right)  }\left(  x\right)  <\theta. \label{Bn}%
\end{equation}
Next, we show that the second term of $T_{n1}$ tends to zero in probability,
leading to%
\[
T_{n1}=a_{n}%
{\displaystyle\int_{0}^{Z_{n-k:n}}}
\dfrac{\overline{F}(v)}{\overline{H}(v)}\mathbf{B}_{n}\left(  x\right)
dx+o_{\mathbb{P}}\left(  1\right)  .
\]
Let $0\leq\zeta<1/4$ and note that since $\overline{H}=\overline{H}^{\left(
0\right)  }+\overline{H}^{\left(  1\right)  },$ then $\overline{H}^{\left(
1\right)  }\leq\overline{H}$ and%
\[
O_{\mathbb{P}}\left(  n^{-\zeta}\right)  a_{n}%
{\displaystyle\int_{0}^{Z_{n-k:n}}}
\frac{\left(  \overline{H}^{\left(  1\right)  }(x)\right)  ^{1/2-\zeta}%
}{\overline{H}(x)}\overline{F}(x)dx\leq O_{\mathbb{P}}\left(  1\right)
n^{-\zeta}a_{n}%
{\displaystyle\int_{0}^{Z_{n-k:n}}}
\frac{\overline{F}(x)}{\left(  \overline{H}(x)\right)  ^{1/2+\zeta}}dx.
\]
We show that%
\[
n^{-\zeta}a_{n}%
{\displaystyle\int_{0}^{Z_{n-k:n}}}
\frac{\overline{F}(x)}{\left(  \overline{H}(x)\right)  ^{1/2+\zeta}%
}dx=n^{-\zeta}a_{n}%
{\displaystyle\int_{0}^{h}}
\frac{\overline{F}(x)}{\left(  \overline{H}(x)\right)  ^{1/2+\zeta}%
}dx+o_{\mathbb{P}}\left(  1\right)  .
\]
Indeed, we have%
\begin{align*}
\left\vert
{\displaystyle\int_{0}^{Z_{n-k:n}}}
\frac{\overline{F}(x)}{\left(  \overline{H}(x)\right)  ^{1/2+\zeta}}dx-%
{\displaystyle\int_{0}^{h}}
\frac{\overline{F}(x)}{\left(  \overline{H}(x)\right)  ^{1/2+\zeta}%
}dx\right\vert  &  =\left\vert
{\displaystyle\int_{h}^{Z_{n-k:n}}}
\frac{\overline{F}(x)}{\left(  \overline{H}(x)\right)  ^{1/2+\zeta}%
}dx\right\vert \\
&  =%
{\displaystyle\int_{\min\left(  h,Z_{n-k:n}\right)  }^{\max\left(
h,Z_{n-k:n}\right)  }}
\frac{\overline{F}(x)}{\left(  \overline{H}(x)\right)  ^{1/2+\zeta}}dx.
\end{align*}
By using Potter's inequalities, given in assertion 5 of Proposition B.1.9 in
\cite{deHF06}, we write for $\epsilon>0,$%
\[
n^{-\zeta}a_{n}%
{\displaystyle\int_{\min\left(  h,Z_{n-k:n}\right)  }^{\min\left(
h,Z_{n-k:n}\right)  }}
\frac{\overline{F}(x)}{\left(  \overline{H}(x)\right)  ^{1/2+\zeta}}dx\leq
k^{-\zeta}\left[  x^{-1/\gamma_{1}+(1/2+\zeta)/\gamma\pm\epsilon}\right]
_{\min\left(  1,Z_{n-k:n}/h\right)  }^{\max\left(  1,Z_{n-k:n}/h\right)  }.
\]
On the other hand, combining Corollary 2.2.2 with Potter's inequalities given
in Proposition B.1.9 (5) in \cite{deHF06}, yields that $Z_{n-k:n}%
/h\rightarrow1$ in probability. Therefore, the right-hand side of the previous
inequality tends to zero, as sought. Now, we show that $T_{n1}$ may be
rewritten into%
\begin{equation}
T_{n1}=a_{n}%
{\displaystyle\int_{0}^{h}}
\dfrac{\overline{F}(x)}{\overline{H}(x)}\mathbf{B}_{n}\left(  x\right)
dx+o_{\mathbb{P}}\left(  1\right)  . \label{approxima1}%
\end{equation}
Observe that%
\[
T_{n1}=a_{n}%
{\displaystyle\int_{0}^{h}}
\dfrac{\overline{F}(x)}{\overline{H}(x)}\mathbf{B}_{n}\left(  x\right)
dx+a_{n}%
{\displaystyle\int_{h}^{Z_{n-k:n}}}
\dfrac{\overline{F}(x)}{\overline{H}(x)}\mathbf{B}_{n}\left(  x\right)
dx+o_{\mathbb{P}}\left(  1\right)  ,
\]
with the second term in the right-hand side tending to zero in probability.
Indeed, for fixed $0<\eta,\epsilon<1,$ we have%
\begin{align*}
&  \mathbb{P}\left(  \left\vert a_{n}%
{\displaystyle\int_{h}^{Z_{n-k:n}}}
\frac{\overline{F}(v)}{\overline{H}(v)}\mathbf{B}_{n}\left(  v\right)
dv\right\vert >\eta\right) \\
&  \leq\mathbb{P}\left(  \left\vert \frac{Z_{n-k:n}}{h}-1\right\vert
>\epsilon\right)  +\mathbb{P}\left(  \left\vert a_{n}\int_{h}^{\left(
1+\epsilon\right)  h}\frac{\overline{F}(v)}{\overline{H}(v)}\mathbf{B}%
_{n}\left(  v\right)  dv\right\vert >\eta\right)  ,
\end{align*}
where, in virtue of the fact that $Z_{n-k:n}/h\overset{\mathbb{P}}%
{\rightarrow}1,$ the first term tends to zero. It remains to show that the
second term in the right-hand side is also asymptotically negligible. We have
$\overline{H}^{\left(  1\right)  }\leq\overline{H},$ then%
\begin{align*}
\mathbf{E}\left\vert a_{n}\int_{h}^{\left(  1+\epsilon\right)  h}%
\frac{\overline{F}(v)}{\overline{H}(v)}\mathbf{B}_{n}\left(  v\right)
dv\right\vert  &  \leq a_{n}\int_{h}^{\left(  1+\epsilon\right)  h}%
\frac{\overline{F}(v)}{\overline{H}(v)}\sqrt{\overline{H}^{\left(  1\right)
}\left(  v\right)  }dv\\
&  \leq a_{n}\int_{h}^{\left(  1+\epsilon\right)  h}\frac{\overline{F}%
(v)}{\sqrt{\overline{H}(v)}}dv.
\end{align*}
Changing variables and applying Potter's inequalities to the regularly varying
function $\overline{F}(x)/\sqrt{\overline{H}(x)},$ yield that, for all large
$n$ and $\xi>0,$ we have%
\begin{align*}
\mathbf{E}\left\vert a_{n}\int_{h}^{\left(  1+\epsilon\right)  h}%
\frac{\overline{F}(v)}{\overline{H}(v)}\mathbf{B}_{n}\left(  v\right)
dv\right\vert  &  \leq a_{n}\frac{h\overline{F}(h)}{\sqrt{\overline{H}(h)}%
}\int_{1}^{1+\epsilon}v^{-1/\gamma_{1}+1/\left(  2\gamma\right)  \pm\xi}dv\\
&  =\int_{1}^{1+\epsilon}v^{-1/\gamma_{1}+1/\left(  2\gamma\right)  \pm\xi}dv.
\end{align*}
The latter integral is clearly finite and tends to zero as $\epsilon
\downarrow0.$ By similar arguments using approximations $\left(
\ref{Cs}\right)  $ and $\left(  \ref{Necir}\right)  ,$ we also show that%
\begin{equation}
T_{n2}=a_{n}%
{\displaystyle\int_{0}^{h}}
\left\{
{\displaystyle\int_{0}^{x}}
\dfrac{\mathbf{B}_{n}(v)}{\overline{H}^{2}(v)}d\overline{H}(v)\right\}
\overline{F}(x)dx+o_{\mathbb{P}}\left(  1\right)  \label{approxima2}%
\end{equation}
and%
\begin{equation}
T_{n3}=-a_{n}%
{\displaystyle\int_{0}^{h}}
\left\{
{\displaystyle\int_{0}^{x}}
\dfrac{\mathbf{B}_{n}^{\ast}(v)}{\overline{H}^{2}(v)}d\overline{H}^{\left(
1\right)  }(v)\right\}  \overline{F}(x)dx+o_{\mathbb{P}}\left(  1\right)  ,
\label{approxima3}%
\end{equation}
where%
\begin{equation}
\mathbf{B}_{n}^{\ast}\left(  x\right)  :=\mathbf{B}_{n}(x)-B_{n}\left(
1-\overline{H}^{\left(  0\right)  }\left(  x\right)  \right)  ,\text{ for
}0<\overline{H}^{\left(  0\right)  }\left(  x\right)  <1-\theta. \label{Bn*}%
\end{equation}
Before we examine $T_{n4},$ we provide an approximation to $T_{n5},$ for which
a change of variables yields%
\[
T_{n5}=-\sqrt{k}%
{\displaystyle\int_{Z_{n-k:n}/h}^{1}}
\frac{\overline{F}(hx)}{\overline{F}(h)}dx.
\]
For the purpose of using the second-order condition of regular variation
$\left(  \ref{Condi}\right)  $ of $\overline{F},$ we write%
\begin{equation}
T_{n5}=-\sqrt{k}A_{1}(h)%
{\displaystyle\int_{Z_{n-k:n}/h}^{1}}
\left(  \dfrac{\overline{F}(hx)/\overline{F}(h)-x^{-1/\gamma_{1}}}{A_{1}%
(h)}\right)  dx-\sqrt{k}%
{\displaystyle\int_{Z_{n-k:n}/h}^{1}}
x^{-1/\gamma_{1}}dx. \label{Tn5}%
\end{equation}
From the inequality $\left(  2.3.23\right)  $ of Theorem 2.3.9 in
\cite{deHF06}, page 48, we infer that the first integral in $\left(
\ref{Tn5}\right)  $ is equal to%
\[
\left(  1+o_{\mathbb{P}}\left(  1\right)  \right)
{\displaystyle\int_{Z_{n-k:n}/h}^{1}}
x^{-1/\gamma_{1}}\left(  x^{\tau_{1}/\gamma_{1}}-1\right)  /\gamma_{1}\tau
_{1}dx,
\]
which tends to zero in probability due to the fact that $Z_{n-k:n}%
/h\overset{\mathbb{P}}{\rightarrow}1.$ Moreover, the term $\sqrt{k}A_{1}(h)$
has, by assumption, a finite limit. Consequently, the first term in the
right-hand side of $\left(  \ref{Tn5}\right)  $ is asymptotically negligible.
We develop the second integral and make a Taylor's expansion. Knowing, once
again, that $Z_{n-k:n}/h\overset{\mathbb{P}}{\rightarrow}1$ ultimately yields
that%
\[
T_{n5}=\left(  1+o_{\mathbb{P}}\left(  1\right)  \right)  \sqrt{k}\left(
\frac{Z_{n-k:n}}{h}-1\right)  .
\]
By using result $\left(  2.7\right)  $ of Theorem 2.1 in \cite{BMN15}, we get%
\begin{equation}
T_{n5}=\gamma\sqrt{\frac{n}{k}}\mathbf{B}_{n}^{\ast}\left(  h\right)
+o_{\mathbb{P}}\left(  1\right)  . \label{approxima4}%
\end{equation}
Next, we readily check that the fourth term $T_{n4}$\textbf{ }tends to zero in
probability. Indeed, we have $%
{\displaystyle\int_{0}^{Z_{n-k:n}}}
\overline{F}(x)dx<\mu$ and by assumption $\sqrt{k}h\overline{F}(h)\rightarrow
\infty.$ Finally, for the last term $T_{n6}$ we use the second-order regular
variation of the tails $\overline{F}$ and $\overline{G}.$ From Lemma 3 in
\cite{Hua}, there exist two positive constants $c_{1}$ $c_{2}$ such that
$h=\left(  1+o\left(  1\right)  \right)  c_{1}\left(  k/n\right)  ^{-\gamma}$
and $\overline{F}(h)=\left(  1+o\left(  1\right)  \right)  c_{2}\left(
k/n\right)  ^{\gamma/\gamma_{1}},$ thus $a_{n}=\left(  1+o\left(  1\right)
\right)  c_{1}c_{2}\left(  k/n\right)  ^{1/2+\gamma-\gamma/\gamma_{1}}.$ But
the indices $\gamma_{1}$ and $\gamma_{2}$ belong to $\mathcal{R},$ hence
$1/2+\gamma-\gamma/\gamma_{1}$ are positive. Therefore, $a_{n}\rightarrow0$
and $T_{n6}=o_{\mathbb{P}}\left(  1\right)  .\mathbb{\ }$The four
approximations $\left(  \ref{approxima1}\right)  ,$ $\left(  \ref{approxima2}%
\right)  ,$ $\left(  \ref{approxima3}\right)  $ and $\left(  \ref{approxima4}%
\right)  $ together with the asymptotic negligibility of both $T_{n4}$ and
$T_{n6}$ give%
\begin{align}
&  \frac{\sqrt{k}\left(  \widehat{\mu}_{1}-\mu_{1}\right)  }{h\overline{F}%
(h)}\nonumber\\
&  =a_{n}%
{\displaystyle\int_{0}^{h}}
\frac{\mathbf{B}_{n}\left(  x\right)  }{\overline{H}(x)}\overline
{F}(x)dx+a_{n}%
{\displaystyle\int_{0}^{h}}
\left\{
{\displaystyle\int_{0}^{x}}
\dfrac{\mathbf{B}_{n}(v)}{\overline{H}^{2}(v)}d\overline{H}(v)\right\}
\overline{F}(x)dx\label{mu-chap1}\\
&  -a_{n}%
{\displaystyle\int_{0}^{h}}
\left\{
{\displaystyle\int_{0}^{x}}
\dfrac{\mathbf{B}_{n}^{\ast}(v)}{\overline{H}^{2}(v)}d\overline{H}^{\left(
1\right)  }(v)\right\}  \overline{F}(x)dx+\gamma\sqrt{\frac{n}{k}}%
\mathbf{B}_{n}^{\ast}\left(  h\right)  +o_{\mathbb{P}}\left(  1\right)
.\nonumber
\end{align}
Let us now treat the term $\sqrt{k}\left(  \widehat{\mu}_{2}-\mu_{2}\right)
/\left(  h\overline{F}(h)\right)  \mathbf{.}$ Consider the following forms of
$\mu_{2}$ and $\widehat{\mu}_{2}:$%

\[
\mu_{2}=h\overline{F}\left(  h\right)  \int_{1}^{\infty}\frac{\overline
{F}\left(  hx\right)  }{\overline{F}\left(  h\right)  }dx\text{ and }%
\widehat{\mu}_{2}=\frac{\widehat{\gamma}_{1}^{(H,c)}}{1-\widehat{\gamma}%
_{1}^{(H,c)}}Z_{n-k:n}\overline{F}\left(  Z_{n-k:n}\right)  \frac{\overline
{F}_{n}\left(  Z_{n-k:n}\right)  }{\overline{F}\left(  Z_{n-k:n}\right)  },
\]
and decompose $\sqrt{k}\left(  \widehat{\mu}_{2}-\mu_{2}\right)  /\left(
h\overline{F}\left(  h\right)  \right)  $ into the sum of%
\[%
\begin{array}
[c]{cl}%
S_{n1}:= & \sqrt{k}\dfrac{\widehat{\gamma}_{1}^{(H,c)}}{1-\widehat{\gamma}%
_{1}^{(H,c)}}\dfrac{\overline{F}\left(  Z_{n-k:n}\right)  }{\overline
{F}\left(  h\right)  }\dfrac{\overline{F}_{n}\left(  Z_{n-k:n}\right)
}{\overline{F}\left(  Z_{n-k:n}\right)  }\left\{  \dfrac{Z_{n-k:n}}%
{h}-1\right\}  ,\bigskip\\
S_{n2}:= & \sqrt{k}\dfrac{\overline{F}\left(  Z_{n-k:n}\right)  }{\overline
{F}\left(  h\right)  }\dfrac{\overline{F}_{n}\left(  Z_{n-k:n}\right)
}{\overline{F}\left(  Z_{n-k:n}\right)  }\left\{  \dfrac{\widehat{\gamma}%
_{1}^{(H,c)}}{1-\widehat{\gamma}_{1}^{(H,c)}}-\dfrac{\gamma_{1}}{1-\gamma_{1}%
}\right\}  ,\bigskip\\
S_{n3}:= & \sqrt{k}\dfrac{\gamma_{1}}{1-\gamma_{1}}\dfrac{\overline{F}\left(
Z_{n-k:n}\right)  }{\overline{F}\left(  h\right)  }\left\{  \dfrac
{\overline{F}_{n}\left(  Z_{n-k:n}\right)  }{\overline{F}\left(
Z_{n-k:n}\right)  }-1\right\}  ,\bigskip\\
S_{n4}:= & \sqrt{k}\dfrac{\gamma_{1}}{1-\gamma_{1}}\left\{  \dfrac
{\overline{F}\left(  Z_{n-k:n}\right)  }{\overline{F}\left(  h\right)
}-\left(  \dfrac{Z_{n-k:n}}{h}\right)  ^{-1/\gamma_{1}}\right\}  ,\bigskip\\
S_{n5}:= & \sqrt{k}\dfrac{\gamma_{1}}{1-\gamma_{1}}\left\{  \left(
\dfrac{Z_{n-k:n}}{h}\right)  ^{-1/\gamma_{1}}-1\right\}  ,\bigskip\\
S_{n6}:= & \sqrt{k}\left\{  \dfrac{\gamma_{1}}{1-\gamma_{1}}-%
{\displaystyle\int_{1}^{\infty}}
\dfrac{\overline{F}\left(  hx\right)  }{\overline{F}\left(  h\right)
}dx\right\}  .
\end{array}
\]
For the first term, we have $\widehat{\gamma}_{1}\overset{\mathbb{P}%
}{\rightarrow}\gamma_{1}$ and $Z_{n-k:n}/h\overset{\mathbb{P}}{\rightarrow}1,$
which, in view of the regular variation of $\overline{F},$ implies that
$\overline{F}\left(  Z_{n-k:n}\right)  =\left(  1+o_{\mathbb{P}}\left(
1\right)  \right)  \overline{F}\left(  h\right)  .$ Moreover, from $\left(
\ref{p(1-p)}\right)  $ we infer that\textbf{ }$\overline{F}_{n}\left(
Z_{n-k:n}\right)  =\left(  1+o_{\mathbb{P}}\left(  1\right)  \right)
\overline{F}\left(  Z_{n-k:n}\right)  .$ It follows that%
\[
S_{n1}=\left(  1+o_{\mathbb{P}}\left(  1\right)  \right)  \frac{\gamma_{1}%
}{1-\gamma_{1}}\sqrt{k}\left(  \dfrac{Z_{n-k:n}}{h}-1\right)  ,
\]
which, by applying result $\left(  2.7\right)  $ of Theorem 2.1 in
\cite{BMN15}, is approximated as follows:%
\begin{equation}
S_{n1}=\left(  1+o_{\mathbb{P}}\left(  1\right)  \right)  \frac{\gamma
_{1}\gamma}{1-\gamma_{1}}\sqrt{\frac{n}{k}}\mathbf{B}_{n}^{\ast}\left(
h\right)  . \label{Sn1}%
\end{equation}
By using similar arguments, we easily show that%
\[
S_{n2}=\left(  1+o_{\mathbb{P}}\left(  1\right)  \right)  \frac{1}{\left(
1-\gamma_{1}\right)  ^{2}}\sqrt{k}\left(  \widehat{\gamma}_{1}^{(H,c)}%
-\gamma_{1}\right)  ,
\]
which, by applying result $\left(  2.9\right)  $ (after a change of variables)
of Theorem 2.1 in \cite{BMN15}, becomes%
\begin{equation}
S_{n2}=\frac{1+o_{\mathbb{P}}\left(  1\right)  }{\left(  1-\gamma_{1}\right)
^{2}}\left(  \frac{1}{p}\sqrt{\frac{n}{k}}\int_{1}^{\infty}v^{-1}%
\mathbf{B}_{n}^{\ast}\left(  hv\right)  dv-\frac{\gamma_{1}}{p}\sqrt{\frac
{n}{k}}\mathbf{B}_{n}\left(  h\right)  +\frac{\sqrt{k}A_{1}\left(  h\right)
}{1-p\tau_{1}}\right)  . \label{Sn2}%
\end{equation}
For $S_{n3},$ we have%
\[
S_{n3}=\left(  1+o_{\mathbb{P}}\left(  1\right)  \right)  \frac{\gamma_{1}%
}{1-\gamma_{1}}\sqrt{k}\left(  \frac{\overline{F}_{n}\left(  Z_{n-k:n}\right)
}{\overline{F}\left(  Z_{n-k:n}\right)  }-1\right)  .
\]
Using Proposition \ref{Prop1}, we have%
\begin{align}
S_{n3}  &  =\left(  1+o_{\mathbb{P}}\left(  1\right)  \right)  \sqrt{\frac
{k}{n}}\frac{\gamma_{1}}{1-\gamma_{1}}\left(
{\displaystyle\int_{0}^{h}}
\dfrac{\mathbf{B}_{n}(v)}{\overline{H}^{2}(v)}d\overline{H}(v)-%
{\displaystyle\int_{0}^{h}}
\dfrac{\mathbf{B}_{n}^{\ast}(v)}{\overline{H}^{2}(v)}d\overline{H}^{\left(
1\right)  }(v)\right) \label{Sn3}\\
&  +\left(  1+o_{\mathbb{P}}\left(  1\right)  \right)  \frac{\gamma_{1}%
}{1-\gamma_{1}}\sqrt{\frac{n}{k}}\mathbf{B}_{n}\left(  h\right)
+o_{\mathbb{P}}\left(  1\right)  .\nonumber
\end{align}
For the fourth term, we use the second-order condition $\left(  \ref{Condi}%
\right)  $ of $\overline{F}$ and the fact that $Z_{n-k:n}/h\overset
{\mathbb{P}}{\rightarrow}1$ to get%
\begin{equation}
S_{n4}=o_{\mathbb{P}}\left(  \sqrt{k}A_{1}\left(  h\right)  \right)
=o_{\mathbb{P}}\left(  1\right)  ,\text{ as }n\rightarrow\infty. \label{Sn4}%
\end{equation}
For $S_{n5},$ we apply the mean value theorem with the fact $Z_{n-k:n}%
/h\overset{\mathbb{P}}{\rightarrow}1$ to have%
\[
S_{n5}=-\left(  1+o_{\mathbb{P}}\left(  1\right)  \right)  \frac{1}%
{1-\gamma_{1}}\sqrt{k}\left(  \frac{Z_{n-k:n}}{h}-1\right)  .
\]
Using, once again, result $\left(  2.7\right)  $ of Theorem 2.1 in
\cite{BMN15} yields%
\begin{equation}
S_{n5}=-\left(  1+o_{\mathbb{P}}\left(  1\right)  \right)  \frac{\gamma
}{1-\gamma_{1}}\sqrt{\frac{n}{k}}\mathbf{B}_{n}^{\ast}\left(  h\right)  .
\label{Sn5}%
\end{equation}
For the last term, we first note that%
\[
\frac{S_{n6}}{\sqrt{k}}=\int_{1}^{\infty}x^{-1/\gamma_{1}}dx-\int_{1}^{\infty
}\frac{\overline{F}\left(  hx\right)  }{\overline{F}\left(  h\right)  }dx.
\]
Then, by applying the uniform inequality of regularly varying functions
\citep[see, e.g., Theorem 2.3.9 in][page 48]{deHF06} together with the regular
variation of $\left\vert A_{1}\right\vert ,$ we show that%
\begin{equation}
S_{n6}\sim\frac{\sqrt{k}A_{1}\left(  h\right)  }{\left(  \gamma_{1}+\tau
_{1}-1\right)  \left(  1-\gamma_{1}\right)  }. \label{Sn6}%
\end{equation}
By gathering $\left(  \ref{Sn1}\right)  ,$ $\left(  \ref{Sn2}\right)  ,$
$\left(  \ref{Sn3}\right)  ,$ $\left(  \ref{Sn4}\right)  ,$ $\left(
\ref{Sn5}\right)  $ and $\left(  \ref{Sn6}\right)  $ we end up with%
\begin{align}
\dfrac{\sqrt{k}\left(  \widehat{\mu}_{2}-\mu_{2}\right)  }{h\overline{F}(h)}
&  =\dfrac{\gamma_{1}}{1-\gamma_{1}}\sqrt{\dfrac{k}{n}}\left\{
{\displaystyle\int_{0}^{h}}
\dfrac{\mathbf{B}_{n}(v)}{\overline{H}^{2}(v)}d\overline{H}(v)-%
{\displaystyle\int_{0}^{h}}
\dfrac{\mathbf{B}_{n}^{\ast}(v)}{\overline{H}^{2}(v)}d\overline{H}^{\left(
1\right)  }(v)\right\} \nonumber\\
&  +\sqrt{\dfrac{n}{k}}\left\{  -\dfrac{\gamma_{1}\mathbf{B}_{n}\left(
h\right)  }{p\left(  1-\gamma_{1}\right)  ^{2}}-\gamma\mathbf{B}_{n}^{\ast
}\left(  h\right)  +\dfrac{%
{\displaystyle\int_{1}^{\infty}}
v^{-1}\mathbf{B}_{n}^{\ast}\left(  hv\right)  dv}{p\left(  1-\gamma
_{1}\right)  ^{2}}\right\} \label{mu-chap2}\\
&  +R_{n1}+o_{\mathbb{P}}(1),\nonumber
\end{align}
where%
\[
R_{n1}:=\dfrac{\sqrt{k}A_{1}\left(  h\right)  }{\left(  1-\gamma_{1}\right)
}\left\{  \dfrac{1}{\left(  1-p\tau_{1}\right)  \left(  1-\gamma_{1}\right)
}+\dfrac{1}{\left(  \gamma_{1}+\tau_{1}-1\right)  }\right\}  .
\]
Finally, by summing up equations $\left(  \ref{mu-chap1}\right)  $ and
$\left(  \ref{mu-chap2}\right)  $ we obtain%
\[
\frac{\sqrt{k}\left(  \widehat{\mu}-\mu\right)  }{h\overline{F}(h)}=\sum
_{i=1}^{5}D_{ni}+R_{n1}+o_{\mathbb{P}}(1),
\]
where%
\[%
\begin{array}
[c]{cl}%
D_{n1}:= & a_{n}%
{\displaystyle\int_{0}^{h}}
\dfrac{\mathbf{B}_{n}\left(  v\right)  }{\overline{H}(v)}\overline
{F}(v)dv,\text{ \ }D_{n2}:=a_{n}%
{\displaystyle\int_{0}^{h}}
\left\{
{\displaystyle\int_{0}^{x}}
\dfrac{\mathbf{B}_{n}(v)}{\overline{H}^{2}(v)}d\overline{H}(v)\right\}
\overline{F}(x)dx,\bigskip\\
D_{n3}:= & -a_{n}%
{\displaystyle\int_{0}^{h}}
\left\{
{\displaystyle\int_{0}^{x}}
\dfrac{\mathbf{B}_{n}^{\ast}(v)}{\overline{H}^{2}(v)}d\overline{H}^{\left(
1\right)  }(v)\right\}  \overline{F}(x)dx,\bigskip\\
D_{n4}:= & \dfrac{\gamma_{1}}{1-\gamma_{1}}\sqrt{\dfrac{k}{n}}\left(
{\displaystyle\int_{0}^{h}}
\dfrac{\mathbf{B}_{n}(v)}{\overline{H}^{2}(v)}d\overline{H}(v)-%
{\displaystyle\int_{0}^{h}}
\dfrac{\mathbf{B}_{n}^{\ast}(v)}{\overline{H}^{2}(v)}d\overline{H}^{\left(
1\right)  }(v)\right)  ,\bigskip\\
D_{n5}:= & \sqrt{\dfrac{n}{k}}\left(  -\dfrac{\gamma_{1}}{p\left(
1-\gamma_{1}\right)  ^{2}}\mathbf{B}_{n}\left(  h\right)  +\dfrac{1}{p\left(
1-\gamma_{1}\right)  ^{2}}%
{\displaystyle\int_{1}^{\infty}}
v^{-1}\mathbf{B}_{n}^{\ast}\left(  hv\right)  dv\right)  .
\end{array}
\]
Note that $D_{n2}$ is of the form $-a_{n}%
{\displaystyle\int_{0}^{h}}
\psi\left(  x\right)  d\varphi\left(  x\right)  ,$ where $\varphi\left(
x\right)  :=%
{\displaystyle\int_{x}^{\infty}}
\overline{F}(u)du$ and $\psi\left(  x\right)  :=%
{\displaystyle\int_{0}^{x}}
\mathbf{B}_{n}(v)/\overline{H}^{2}(v)d\overline{H}(v).$ Integrating by parts
yields%
\[
D_{n2}=a_{n}%
{\displaystyle\int_{0}^{h}}
\varphi\left(  v\right)  \dfrac{\mathbf{B}_{n}(v)}{\overline{H}^{2}%
(v)}d\overline{H}(v)-\sqrt{\frac{k}{n}}\frac{%
{\displaystyle\int_{h}^{\infty}}
\overline{F}(x)dx}{h\overline{F}(h)}%
{\displaystyle\int_{0}^{h}}
\dfrac{\mathbf{B}_{n}(v)}{\overline{H}^{2}(v)}d\overline{H}(v)
\]
\textbf{ }Equation $\left(  B.1.9\right)  $ in Theorem B.1.5 (Karamata's
theorem) in \cite{deHF06} yields that $%
{\displaystyle\int_{h}^{\infty}}
\overline{F}(x)dx/\left(  h\overline{F}(h)\right)  \rightarrow\gamma
_{1}/\left(  1-\gamma_{1}\right)  .$ We apply the same technique to $D_{n3}$
and get%
\[
D_{n2}+D_{n3}+D_{n4}=L_{n2}+L_{n3}+R_{n2},
\]
where $R_{n2}:=o_{\mathbb{P}}(D_{n4})$ and%
\[
L_{n2}:=a_{n}%
{\displaystyle\int_{0}^{h}}
\dfrac{\mathbf{B}_{n}(v)}{\overline{H}^{2}(v)}\varphi\left(  v\right)
d\overline{H}(v)\text{ and }L_{n3}:=-a_{n}%
{\displaystyle\int_{0}^{h}}
\dfrac{\mathbf{B}_{n}^{\ast}(v)}{\overline{H}^{2}(v)}\varphi\left(  v\right)
d\overline{H}^{\left(  1\right)  }(v).
\]
This yields the following new decomposition:%

\[
\frac{\sqrt{k}\left(  \widehat{\mu}-\mu\right)  }{h\overline{F}(h)}=\sum
_{i=1}^{4}L_{ni}+R_{n1}+R_{n2}+o_{\mathbb{P}}(1),
\]
with $L_{n1}:=D_{n1}$ and $L_{n4}:=D_{n5}.$ The four $L_{ni}$ are centred
Gaussian rv's whose asymptotic second moments are finite, as we will see
thereafter. Indeed, $L_{n4}$ is the Gaussian approximation to Hill's estimator
given by result $\left(  2.9\right)  $ of Theorem 2.1 in \cite{BMN15}, hence
we have $\lim_{n\rightarrow\infty}\mathbf{E}\left[  L_{n4}^{2}\right]
<\infty.$ For the three others, we literally compute the asymptotic moments of
order two. Note that from the covariance structure in \cite{C96}, page 2768,
we have the following useful formulas:%
\begin{equation}
\left\{
\begin{tabular}
[c]{l}%
$\mathbf{E}\left[  \mathbf{B}_{n}\left(  u\right)  \mathbf{B}_{n}\left(
v\right)  \right]  =\min\left(  \overline{H}^{\left(  1\right)  }\left(
u\right)  ,\overline{H}^{\left(  1\right)  }\left(  v\right)  \right)
-\overline{H}^{\left(  1\right)  }\left(  u\right)  \overline{H}^{\left(
1\right)  }\left(  v\right)  ,\smallskip$\\
$\mathbf{E}\left[  \mathbf{B}_{n}^{\ast}\left(  u\right)  \mathbf{B}_{n}%
^{\ast}\left(  v\right)  \right]  =\min\left(  \overline{H}\left(  u\right)
,\overline{H}\left(  v\right)  \right)  -\overline{H}\left(  u\right)
\overline{H}\left(  v\right)  ,\smallskip$\\
$\mathbf{E}\left[  \mathbf{B}_{n}\left(  u\right)  \mathbf{B}_{n}^{\ast
}\left(  v\right)  \right]  =\min\left(  \overline{H}^{\left(  1\right)
}\left(  u\right)  ,\overline{H}^{\left(  1\right)  }\left(  v\right)
\right)  -\overline{H}^{\left(  1\right)  }\left(  u\right)  \overline
{H}\left(  v\right)  .$%
\end{tabular}
\ \right.  \label{covariances}%
\end{equation}
After elementary but very tedious calculations, using these formulas with
l'H\^{o}pital's rule, we get as $n\rightarrow\infty,$%
\begin{equation}
\left\{
\begin{array}
[c]{l}%
\dfrac{k}{n}%
{\displaystyle\int_{0}^{h}}
{\displaystyle\int_{0}^{h}}
\dfrac{\mathbf{E}\left[  \mathbf{B}_{n}\left(  u\right)  \mathbf{B}_{n}\left(
v\right)  \right]  }{\overline{H}^{2}\left(  u\right)  \overline{H}^{2}\left(
v\right)  }d\overline{H}\left(  u\right)  d\overline{H}\left(  v\right)
\rightarrow p,\bigskip\\
\dfrac{k}{n}%
{\displaystyle\int_{0}^{h}}
{\displaystyle\int_{0}^{h}}
\dfrac{\mathbf{E}\left[  \mathbf{B}_{n}^{\ast}\left(  u\right)  \mathbf{B}%
_{n}^{\ast}\left(  v\right)  \right]  }{\overline{H}^{2}\left(  u\right)
\overline{H}^{2}\left(  v\right)  }d\overline{H}^{\left(  1\right)  }\left(
u\right)  d\overline{H}^{\left(  1\right)  }\left(  v\right)  \rightarrow
p^{2},\bigskip\\
\dfrac{k}{n}%
{\displaystyle\int_{0}^{h}}
{\displaystyle\int_{0}^{h}}
\dfrac{\mathbf{E}\left[  \mathbf{B}_{n}\left(  u\right)  \mathbf{B}_{n}^{\ast
}\left(  v\right)  \right]  }{\overline{H}^{2}\left(  u\right)  \overline
{H}^{2}\left(  v\right)  }d\overline{H}\left(  u\right)  d\overline
{H}^{\left(  1\right)  }\left(  v\right)  \rightarrow p^{2}.
\end{array}
\right.  \label{limits}%
\end{equation}
By using the results above, we obtain%
\[
\mathbf{E}\left[  L_{n1}\right]  ^{2}\rightarrow\frac{2\gamma^{2}\gamma
_{1}^{2}}{\left(  \gamma_{1}-\gamma+\gamma\gamma_{1}\right)  \left(
\gamma_{1}-2\gamma+2\gamma\gamma_{1}\right)  },
\]%
\[
\mathbf{E}\left[  L_{n2}\right]  ^{2}\rightarrow\frac{2p\gamma_{1}^{4}%
}{\left(  \gamma_{1}-1\right)  ^{2}\left(  \gamma_{1}-\gamma+\gamma\gamma
_{1}\right)  \left(  \gamma_{1}-2\gamma+2\gamma\gamma_{1}\right)  },
\]
and%
\[
\mathbf{E}\left[  L_{n3}\right]  ^{2}\rightarrow\frac{2p^{2}\gamma_{1}^{4}%
}{\left(  \gamma_{1}-1\right)  ^{2}\left(  \gamma_{1}-\gamma+\gamma\gamma
_{1}\right)  \left(  \gamma_{1}-2\gamma+2\gamma\gamma_{1}\right)  }.
\]
As a consequence, we conclude that%
\[
\sqrt{k}\frac{\widehat{\mu}-\mu}{h\overline{F}(h)}\overset{d}{\rightarrow
}\mathcal{N}\left(  m,\sigma^{2}\right)  ,\text{ as }n\rightarrow\infty,
\]
where $m:=\lim_{n\rightarrow\infty}R_{n1}$ and $\sigma^{2}:=\lim
_{n\rightarrow\infty}\mathbf{E}\left[  \sum_{i=1}^{4}L_{ni}\right]  ^{2}%
.$\ The expression of $m$ is simple and easily obtainable whilst that of
$\sigma^{2}$ is very complicated and requires extremely laborious
computations. However, we readily check that, it is finite. Indeed, in
addition to the finiteness of the asymptotic second moments $\mathbf{E}\left[
L_{ni}\right]  ^{2}\mathbf{,}$ the asymptotic covariances $\mathbf{E}\left[
L_{ni}L_{nj}\right]  $ are, in virtue of Cauchy-Schwarz inequality, finite as
well. Finally, we use the facts that $Z_{n-k:n}/h$ and $\overline{F}%
(Z_{n-k:n})/\overline{F}(h)$ tend to $1$ in probability to achieve the
proof.\hfill$\Box$

\section{\textbf{Appendix}}

\noindent In the following basic proposition, we give an asymptotic
representation to the Kaplan-Meier product limit estimator (in $Z_{n-k:n}).$
This result will of prime importance in the study of the limiting behaviors of
many statistics based on censored data exhibiting extreme values.

\begin{proposition}
\textbf{\label{Prop1}}Assume that the second-order conditions $\left(
\ref{Condi}\right)  $ hold. Let $k=k_{n}$ be an integer sequence satisfying,
in addition to $(\ref{k}),$ $\sqrt{k}A_{j}\left(  h\right)  =O\left(
1\right)  ,$ for $j=1,2,$ as $n\rightarrow\infty.$ Then there exists a
sequence of Brownian bridges $\left\{  B_{n}\left(  s\right)  ;\text{ }0\leq
s\leq1\right\}  $ such that%
\begin{align*}
\sqrt{k}\left(  \frac{\overline{F}_{n}\left(  Z_{n-k:n}\right)  }{\overline
{F}\left(  Z_{n-k:n}\right)  }-1\right)   &  =\sqrt{\frac{n}{k}}\mathbf{B}%
_{n}\left(  h\right) \\
&  +\sqrt{\frac{k}{n}}\left(  \int_{0}^{h}\frac{\mathbf{B}_{n}\left(
v\right)  }{\overline{H}^{2}\left(  v\right)  }d\overline{H}\left(  v\right)
-\int_{0}^{h}\frac{\mathbf{B}_{n}^{\ast}\left(  v\right)  }{\overline{H}%
^{2}\left(  v\right)  }d\overline{H}^{\left(  1\right)  }\left(  v\right)
\right)  +o_{\mathbb{P}}\left(  1\right)  ,
\end{align*}
where $\mathbf{B}_{n}\left(  v\right)  $ and $\mathbf{B}_{n}^{\ast}\left(
v\right)  $ are respectively defined in $\left(  \ref{Bn}\right)  $ and
$\left(  \ref{Bn*}\right)  .$ Consequently,%
\begin{equation}
\sqrt{k}\left(  \frac{\overline{F}_{n}\left(  Z_{n-k:n}\right)  }{\overline
{F}\left(  Z_{n-k:n}\right)  }-1\right)  \overset{d}{\rightarrow}%
\mathcal{N}\left(  0,p\right)  ,\text{ as }n\rightarrow\infty. \label{p(1-p)}%
\end{equation}

\end{proposition}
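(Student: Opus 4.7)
My plan is to obtain the representation by applying the Cs\"org\H{o} (1996) asymptotic expansion, namely identity (\ref{ratio}), at the random point $x = Z_{n-k:n}$, and then reducing each surviving piece to a functional of the limiting Brownian bridge $B_n$. Multiplying (\ref{ratio}) through by $\sqrt{k}$ kills the explicit remainders, since $\sqrt{k}\,O_{\mathbb{P}}(1/k) = o_{\mathbb{P}}(1)$ and $\sqrt{k}\,O_{\mathbb{P}}(1/\sqrt{n}) = O_{\mathbb{P}}(\sqrt{k/n}) = o_{\mathbb{P}}(1)$. I would then invoke the Gaussian approximations (\ref{Cs}) and (\ref{Necir}) to replace the uniform empirical process $\alpha_n$, hence $\beta_n$ and $\widetilde{\beta}_n$ via (\ref{rep-H1})--(\ref{rep-H0}), by its Brownian-bridge counterpart; this naturally produces $\mathbf{B}_n$ from $\beta_n$ and $\mathbf{B}_n^{\ast} = \mathbf{B}_n - B_n(1-\overline{H}^{(0)})$ from $\beta_n + \widetilde{\beta}_n$, matching definitions (\ref{Bn}) and (\ref{Bn*}). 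The approximation residuals, once integrated against $\overline{H}^{-2}d\overline{H}$ or $\overline{H}^{-2}d\overline{H}^{(1)}$, are controlled by the factor $s^{1/2-\zeta}$ of (\ref{Cs})--(\ref{Necir}) together with Potter bounds on the regularly varying integrands, exactly as in the treatment of $T_{n1}, T_{n2}, T_{n3}$ in the proof of Theorem~\ref{Theo}.

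Next, I would replace the random upper limit $Z_{n-k:n}$ by the deterministic level $h$. The consistency $Z_{n-k:n}/h \overset{\mathbb{P}}{\rightarrow} 1$ (Corollary 2.2.2 of de Haan and Ferreira, 2006) combined with Potter bounds makes the integrals over the shrinking interval $[\min(h,Z_{n-k:n}), \max(h,Z_{n-k:n})]$ asymptotically negligible, by the same argument that removes the random limit in $T_{n1}$ in the proof of Theorem~\ref{Theo}. The boundary piece $\sqrt{k}\,\beta_n(Z_{n-k:n})/(\sqrt{n}\,\overline{H}(Z_{n-k:n}))$ requires extra work: since $\overline{H}_n(Z_{n-k:n}) = k/n$ by definition of the order statistic and $\overline{H}_n/\overline{H} \to 1$ uniformly on the tail, one has $\overline{H}(Z_{n-k:n}) = (1 + o_{\mathbb{P}}(1))\,k/n$, so after the Gaussian approximation the boundary contribution reads $(1 + o_{\mathbb{P}}(1))\sqrt{n/k}\,\mathbf{B}_n(Z_{n-k:n})$; a Brownian-modulus-of-continuity argument applied to $|\overline{H}^{(1)}(Z_{n-k:n}) - \overline{H}^{(1)}(h)|$ then allows replacing $\mathbf{B}_n(Z_{n-k:n})$ by $\mathbf{B}_n(h)$, producing the leading term $\sqrt{n/k}\,\mathbf{B}_n(h)$ in the representation.

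For the limit in distribution (\ref{p(1-p)}), the three surviving terms are linear functionals of $B_n$ and therefore jointly centered Gaussian; their sum is Gaussian with asymptotic variance obtained directly from the covariance formulas (\ref{covariances}) together with the key limits (\ref{limits}) and the ratio $\overline{H}^{(1)}(h)/\overline{H}(h) \to p$ from Karamata's theorem, the individual variances and cross-covariances combining to give the single value $p$. I expect the main obstacle to be the boundary step, which couples the Brownian bridge's value at the random location $Z_{n-k:n}$ to its value at the deterministic $h$ and therefore demands the simultaneous use of (\ref{Necir}) and the modulus of continuity of $B_n$; the final variance bookkeeping, although routine in principle, requires careful tracking of several cross-terms so that they cancel to leave exactly $p$.
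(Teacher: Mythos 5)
Your proposal is correct and follows essentially the same route as the paper: evaluate the Cs\"org\H{o} expansion (\ref{ratio}) at $Z_{n-k:n}$, multiply by $\sqrt{k}$ to absorb the remainders, pass to Brownian bridges via (\ref{Cs})--(\ref{Necir}), and then swap the random level $Z_{n-k:n}$ for $h$ in both the integrals and the boundary term, which is exactly what the paper delegates to Lemma~\ref{Lem3} (your modulus-of-continuity step for $\mathbf{B}_n(Z_{n-k:n})-\mathbf{B}_n(h)$ is the content of its parts $(iii)$--$(iv)$). The variance computation via (\ref{covariances}), (\ref{limits}) and $\overline{H}^{(1)}(h)/\overline{H}(h)\to p$ also matches the paper's bookkeeping.
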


\begin{proof}
Multiplying $\left(  \ref{ratio}\right)  $ by $\sqrt{k}$ yields%
\begin{align*}
&  \sqrt{k}\frac{\overline{F}_{n}\left(  x\right)  -\overline{F}\left(
x\right)  }{\overline{F}\left(  x\right)  }=\sqrt{\frac{k}{n}}\frac{\alpha
_{n}\left(  \theta\right)  -\alpha_{n}\left(  \theta-\overline{H}^{\left(
1\right)  }\left(  x\right)  \right)  }{\overline{H}\left(  x\right)  }\\
&  +\sqrt{\frac{k}{n}}\int_{0}^{x}\frac{\alpha_{n}\left(  \theta\right)
-\alpha_{n}\left(  \theta-\overline{H}^{\left(  1\right)  }\left(  v\right)
\right)  }{\overline{H}^{2}\left(  v\right)  }d\overline{H}\left(  v\right) \\
&  -\sqrt{\frac{k}{n}}\int_{0}^{x}\frac{\alpha_{n}\left(  \theta\right)
-\alpha_{n}\left(  \theta-\overline{H}^{\left(  1\right)  }\left(  v\right)
\right)  -\alpha_{n}\left(  1-\overline{H}^{\left(  0\right)  }\left(
v\right)  \right)  }{\overline{H}^{2}\left(  v\right)  }d\overline{H}^{\left(
1\right)  }\left(  v\right) \\
&  +O_{\mathbb{P}}\left(  \frac{1}{\sqrt{k}}\right)  +O_{\mathbb{P}}\left(
\sqrt{\frac{k}{n}}\right)  .
\end{align*}
The Gaussian approximations $\left(  \ref{Cs}\right)  $ and $\left(
\ref{Necir}\right)  ,$ in $x=Z_{n-k:n},$ and the facts that $\sqrt{k/n}$ and
$1/\sqrt{k}$ tend to zero as $n\rightarrow\infty,$ lead to%
\begin{align*}
&  \sqrt{k}\frac{\overline{F}_{n}\left(  Z_{n-k:n}\right)  -\overline
{F}\left(  Z_{n-k:n}\right)  }{\overline{F}\left(  Z_{n-k:n}\right)  }%
=\sqrt{\frac{n}{k}}\mathbf{B}_{n}\left(  Z_{n-k:n}\right)  +\sqrt{\frac{k}{n}%
}\int_{0}^{Z_{n-k:n}}\frac{\mathbf{B}_{n}\left(  v\right)  }{\overline{H}%
^{2}\left(  v\right)  }d\overline{H}\left(  v\right) \\
&  -\sqrt{\frac{k}{n}}\int_{0}^{Z_{n-k:n}}\frac{\mathbf{B}_{n}^{\ast}\left(
v\right)  }{\overline{H}^{2}\left(  v\right)  }d\overline{H}^{\left(
1\right)  }\left(  v\right)  +o_{\mathbb{P}}\left(  1\right)  .
\end{align*}
Applying Lemma \ref{Lem3} completes the proof. The asymptotic normality
property is straightforward. For the variance computation, we use, in addition
to $\left(  \ref{limits}\right)  ,$ the following results:%
\[
\int_{0}^{h}\frac{\mathbf{E}\left[  \mathbf{B}_{n}\left(  u\right)
\mathbf{B}_{n}\left(  h\right)  \right]  }{\overline{H}^{2}\left(  u\right)
}d\overline{H}\left(  u\right)  \rightarrow-p\text{ and }\int_{0}^{h}%
\frac{\mathbf{E}\left[  \mathbf{B}_{n}^{\ast}\left(  u\right)  \mathbf{B}%
_{n}\left(  h\right)  \right]  }{\overline{H}^{2}\left(  u\right)  }%
d\overline{H}^{1}\left(  u\right)  \rightarrow-p^{2},
\]
similarly obtained as $\left(  \ref{limits}\right)  .$
\end{proof}

\begin{lemma}
\label{Lem3}Assume that the second-order conditions of regular variation
$\left(  \ref{Condi}\right)  $ and let $k:=k_{n}$ be an integer sequence
satisfying $(\ref{k}).$ Then%
\[%
\begin{tabular}
[c]{l}%
$\left(  i\right)  \text{ }\sqrt{\dfrac{k}{n}}%
{\displaystyle\int_{h}^{Z_{n-k:n}}}
\dfrac{\mathbf{B}_{n}\left(  v\right)  }{\overline{H}^{2}\left(  v\right)
}d\overline{H}\left(  v\right)  =o_{\mathbb{P}}\left(  1\right)  .\medskip$\\
$\left(  ii\right)  \text{ }\sqrt{\dfrac{k}{n}}%
{\displaystyle\int_{h}^{Z_{n-k:n}}}
\dfrac{\mathbf{B}_{n}^{\ast}\left(  v\right)  }{\overline{H}^{2}\left(
v\right)  }d\overline{H}^{\left(  1\right)  }\left(  v\right)  =o_{\mathbb{P}%
}\left(  1\right)  .\medskip$\\
$(iii)$ $\sqrt{\dfrac{n}{k}}\left\{  \mathbf{B}_{n}\left(  Z_{n-k:n}\right)
-\mathbf{B}_{n}\left(  h\right)  \right\}  =o_{\mathbb{P}}\left(  1\right)
.\medskip$\\
$(iv)$ $\sqrt{\dfrac{n}{k}}\left\{  \mathbf{B}_{n}^{\ast}\left(
Z_{n-k:n}\right)  -\mathbf{B}_{n}^{\ast}\left(  h\right)  \right\}
=o_{\mathbb{P}}\left(  1\right)  .$%
\end{tabular}
\ \ \ \ \ \ \ \ \
\]

\end{lemma}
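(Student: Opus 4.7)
The four claims split into two pairs. Parts (i) and (ii) bound integrals over the interval between $h$ and $Z_{n-k:n}$, which shrinks since $Z_{n-k:n}/h \xrightarrow{\mathbb{P}} 1$; parts (iii) and (iv) control the oscillation of the Brownian-bridge-type processes between the deterministic point $h$ and the random endpoint $Z_{n-k:n}$. My plan is, in both pairs, to first localize on the event $A_n^\epsilon := \{Z_{n-k:n}/h \in [1-\epsilon,1+\epsilon]\}$ (which has probability tending to one for each fixed $\epsilon>0$), then prove a deterministic bound of order $\epsilon$ on this event, and finally let $\epsilon \downarrow 0$.

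For (i), on $A_n^\epsilon$ the integral is dominated in absolute value by $\int_{h(1-\epsilon)}^{h(1+\epsilon)} |\mathbf{B}_n(v)|/\overline{H}^2(v)\,|d\overline{H}(v)|$. Using $\mathbf{E}|\mathbf{B}_n(v)|^2 = \overline{H}^{(1)}(v)(1-\overline{H}^{(1)}(v)) \leq \overline{H}(v)$ from the covariance formulas $(\ref{covariances})$, together with Fubini and Cauchy--Schwarz, the expectation of this upper bound is at most
\[
\int_{h(1-\epsilon)}^{h(1+\epsilon)} \frac{|d\overline{H}(v)|}{\overline{H}^{3/2}(v)} = 2\left[\overline{H}^{-1/2}(h(1+\epsilon)) - \overline{H}^{-1/2}(h(1-\epsilon))\right].
\]
The regular variation of $\overline{H}$ with index $-1/\gamma$ converts this into $\sqrt{n/k}\bigl[(1+\epsilon)^{1/(2\gamma)} - (1-\epsilon)^{1/(2\gamma)}\bigr](1+o(1))$, so multiplication by the prefactor $\sqrt{k/n}$ leaves $O(\epsilon)$. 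Part (ii) is structurally identical: $\mathbf{E}|\mathbf{B}_n^*(v)|^2 \leq \overline{H}(v)$ by $(\ref{covariances})$, and since $\overline{H}^{(1)}(v)/\overline{H}(v) \to p$ at infinity, replacing $d\overline{H}$ by $d\overline{H}^{(1)}$ only introduces a bounded multiplicative factor.

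For (iii) and (iv), the key observation is that
\[
\mathbf{B}_n(Z_{n-k:n}) - \mathbf{B}_n(h) = B_n\bigl(\theta-\overline{H}^{(1)}(h)\bigr) - B_n\bigl(\theta-\overline{H}^{(1)}(Z_{n-k:n})\bigr),
\]
and on $A_n^\epsilon$ the random argument $\theta - \overline{H}^{(1)}(Z_{n-k:n})$ lies in the deterministic interval $[\theta-\overline{H}^{(1)}(h(1-\epsilon)),\,\theta-\overline{H}^{(1)}(h(1+\epsilon))]$, whose length, by the regular variation of $\overline{H}^{(1)}$, is $\overline{H}^{(1)}(h)\cdot O(\epsilon) = O(\epsilon\,k/n)$. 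The Lévy modulus of continuity for the Brownian bridge then bounds $\sup_{s,t}|B_n(s)-B_n(t)|$ over this interval by $O_{\mathbb{P}}(\sqrt{\epsilon k/n})$, up to a slowly varying logarithmic factor absorbed by $k\to\infty$. Multiplying by $\sqrt{n/k}$ yields $O_{\mathbb{P}}(\sqrt{\epsilon}) = o_{\mathbb{P}}(1)$, and (iv) is identical with $\overline{H}$ replacing $\overline{H}^{(1)}$. The main technical point is this uniform modulus-of-continuity estimate on a small interval; the localization onto $A_n^\epsilon$ is precisely what strips away the randomness of $Z_{n-k:n}$ so that classical sup-norm Brownian-bridge bounds apply directly, after which everything reduces to elementary regular-variation expansions.
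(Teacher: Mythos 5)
Your treatment of (i)--(ii) is essentially the paper's own argument: localize on the event $\{|Z_{n-k:n}/h-1|\le\epsilon\}$, bound the first absolute moment of the bridge by $\sqrt{\smash[b]{\overline H^{(1)}(v)}}\le\sqrt{\overline H(v)}$ via the covariance formulas $(\ref{covariances})$, integrate over the fixed interval, and use $\overline H(h)=k/n$ together with regular variation to obtain an $O(\epsilon)$ bound that is killed by letting $\epsilon\downarrow0$; the paper pulls $\sqrt{\smash[b]{\overline H^{(1)}(h)}}$ out of the integral rather than integrating $\overline H^{-3/2}\,d\overline H$ exactly, but that is cosmetic. (For (ii), the clean justification for passing from $d\overline H$ to $d\overline H^{(1)}$ is the domination of measures $dH^{(1)}\le dH$; the limit $\overline H^{(1)}/\overline H\to p$ concerns the tails, not the measures, and does not by itself give a ``bounded multiplicative factor.'') For (iii)--(iv) your route genuinely differs from the paper's: the paper writes $\{\mathbf B_n^{\ast}(v)\}\overset{d}{=}\{\mathcal B_n(\overline H(v))\}$, splits off the drift $t\mathcal W_n(1)$, and asserts that the Wiener increment is Gaussian with variance $\frac nk(\overline H(Z_{n-k:n})-\overline H(h))\to0$ --- an argument that glosses over the randomness of the evaluation point. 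Your localization of $\theta-\overline H^{(1)}(Z_{n-k:n})$ inside a deterministic interval is, in principle, the more careful way to handle that randomness.

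However, your modulus-of-continuity step fails as written. The L\'evy modulus gives $\sup_{s,t\in I}|B_n(s)-B_n(t)|=O_{\mathbb P}\bigl(\sqrt{\delta_n\log(1/\delta_n)}\bigr)$ with $\delta_n\asymp\epsilon k/n$, so after multiplying by $\sqrt{n/k}$ you are left with $\sqrt{\epsilon\log(n/(\epsilon k))}$; since $\log(n/k)$ is in general unbounded (take $k=\log^2 n$), this is not $o_{\mathbb P}(1)$ for fixed $\epsilon$, and the logarithmic factor is not ``absorbed by $k\to\infty$.'' The repair is that the uniform modulus is the wrong tool here: $I_\epsilon^n$ is a single deterministic interval, so writing $B_n(t)=W_n(t)-tW_n(1)$ and using Brownian scaling, $\sup_{s,t\in I_\epsilon^n}|W_n(s)-W_n(t)|\overset{d}{=}\sqrt{\delta_n}\,\bigl(\sup_{[0,1]}W-\inf_{[0,1]}W\bigr)=O_{\mathbb P}(\sqrt{\delta_n})$ with no logarithm, while the drift contributes only $\delta_n|W_n(1)|=o_{\mathbb P}(\sqrt{\delta_n})$. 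With that substitution your bound $\sqrt{n/k}\cdot O_{\mathbb P}(\sqrt{\epsilon k/n})=O_{\mathbb P}(\sqrt{\epsilon})$ is correct and (iii)--(iv) go through.
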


\begin{proof}
We begin by proving the first assertion. For fixed $0<\eta,\epsilon<1,$ we
have%
\begin{align*}
&  \mathbb{P}\left(  \left\vert \sqrt{\frac{k}{n}}\int_{h}^{Z_{n-k:n}%
}\mathbf{B}_{n}\left(  v\right)  \frac{d\overline{H}\left(  v\right)
}{\overline{H}^{2}\left(  v\right)  }\right\vert >\eta\right) \\
&  \leq\mathbb{P}\left(  \left\vert \frac{Z_{n-k:n}}{h}-1\right\vert
>\epsilon\right)  +\mathbb{P}\left(  \left\vert \sqrt{\frac{k}{n}}\int
_{h}^{\left(  1+\epsilon\right)  h}\mathbf{B}_{n}\left(  v\right)
\frac{d\overline{H}\left(  v\right)  }{\overline{H}^{2}\left(  v\right)
}\right\vert >\eta\right)  .
\end{align*}
It is clear that the first term in the right-hand side tends to zero as
$n\rightarrow\infty.$ Then, it remains to show that the second one goes to
zero as well.\ Indeed, observe that%
\[
\mathbf{E}\left\vert \sqrt{\frac{k}{n}}\int_{h}^{\left(  1+\epsilon\right)
h}\mathbf{B}_{n}\left(  v\right)  \frac{d\overline{H}\left(  v\right)
}{\overline{H}^{2}\left(  v\right)  }\right\vert \leq-\sqrt{\frac{k}{n}}%
\int_{h}^{\left(  1+\epsilon\right)  h}\mathbf{E}\left\vert \mathbf{B}%
_{n}\left(  v\right)  \right\vert \frac{d\overline{H}\left(  v\right)
}{\overline{H}^{2}\left(  v\right)  }.
\]
From the first result of $\left(  \ref{covariances}\right)  ,$ we have
$\mathbf{E}\left\vert \mathbf{B}_{n}\left(  v\right)  \right\vert \leq
\sqrt{\overline{H}^{\left(  1\right)  }\left(  v\right)  }.$ Then%
\[
\mathbf{E}\left\vert \sqrt{\frac{k}{n}}\int_{h}^{\left(  1+\epsilon\right)
h}\mathbf{B}_{n}\left(  v\right)  \frac{d\overline{H}\left(  v\right)
}{\overline{H}^{2}\left(  v\right)  }\right\vert \leq-\sqrt{\frac{k}{n}}%
\int_{h}^{\left(  1+\epsilon\right)  h}\sqrt{\overline{H}^{\left(  1\right)
}\left(  v\right)  }\frac{d\overline{H}\left(  v\right)  }{\overline{H}%
^{2}\left(  v\right)  },
\]
which, in turn, is less than or equal to%
\[
\sqrt{\frac{k}{n}}\sqrt{\overline{H}^{\left(  1\right)  }\left(  h\right)
}\left(  \frac{1}{\overline{H}\left(  \left(  1+\epsilon\right)  h\right)
}-\frac{1}{\overline{H}\left(  h\right)  }\right)  .
\]
Since $\overline{H}\left(  h\right)  =k/n,$ then this may be rewritten into%
\[
\sqrt{\frac{\overline{H}^{\left(  1\right)  }\left(  h\right)  }{\overline
{H}\left(  h\right)  }}\left(  \frac{\overline{H}\left(  h\right)  }%
{\overline{H}\left(  \left(  1+\epsilon\right)  h\right)  }-1\right)  .
\]
Since $\overline{H}^{\left(  1\right)  }\left(  h\right)  \sim p\overline
{H}\left(  h\right)  $ and $\overline{H}\in\mathcal{RV}_{\left(
-1/\gamma\right)  },$ then the previous quantity tends to $p^{1/2}\left(
\left(  1+\epsilon\right)  ^{1/\gamma}-1\right)  $ as $n\rightarrow\infty.$
Being arbitrary, $\epsilon$ may be chosen small enough so that this limit be
zero.$\ $By similar arguments, we also show assertion $\left(  ii\right)  ,$
therefore we omit the details. The last two assertions are shown following the
same technique, that we use to prove $\left(  iv\right)  .$ Notice that, from
the definition of $\mathbf{B}_{n}^{\ast}\left(  v\right)  $ and the second
covariance formula in $\left(  \ref{covariances}\right)  ,$ we have%
\[
\left\{  \mathbf{B}_{n}^{\ast}\left(  v\right)  ;\text{ }v\geq0\right\}
\overset{d}{=}\left\{  \mathcal{B}_{n}\left(  \overline{H}\left(  v\right)
\right)  ;\text{ }v\geq0\right\}  ,
\]
where $\left\{  \mathcal{B}_{n}\left(  s\right)  ;\text{ }0\leq s\leq
1\right\}  $ is a sequence of standard Brownian bridges. Hence%
\[
\sqrt{\dfrac{n}{k}}\left\{  \mathbf{B}_{n}^{\ast}\left(  Z_{n-k:n}\right)
-\mathbf{B}_{n}^{\ast}\left(  h\right)  \right\}  \overset{d}{=}\sqrt
{\dfrac{n}{k}}\left\{  \mathcal{B}_{n}\left(  \overline{H}\left(
Z_{n-k:n}\right)  \right)  -\mathcal{B}_{n}\left(  \overline{H}\left(
h\right)  \right)  \right\}  .
\]
Let $\left\{  \mathcal{W}_{n}\left(  t\right)  ;\text{ }0\leq s\leq1\right\}
$ be a sequence of standard Wiener processes such that $\mathcal{B}_{n}\left(
t\right)  =\mathcal{W}_{n}\left(  t\right)  -t\mathcal{W}_{n}\left(  1\right)
.$ Then $\sqrt{n/k}\left\{  \mathbf{B}_{n}^{\ast}\left(  Z_{n-k:n}\right)
-\mathbf{B}_{n}^{\ast}\left(  h\right)  \right\}  $ equals in distribution to%
\[
\sqrt{\dfrac{n}{k}}\left(  \left\{  \mathcal{W}_{n}\left(  \overline{H}\left(
Z_{n-k:n}\right)  \right)  -\mathcal{W}_{n}\left(  \overline{H}\left(
h\right)  \right)  \right\}  -\left\{  \overline{H}\left(  Z_{n-k:n}\right)
-\overline{H}\left(  h\right)  \right\}  \mathcal{W}_{n}\left(  1\right)
\right)  .
\]
Using the facts that $\overline{H}\left(  h\right)  =k/n$ and $\overline
{H}\left(  Z_{n-k:n}\right)  /\overline{H}\left(  h\right)  =\left(
1+o_{\mathbb{P}}\left(  1\right)  \right)  ,$ we get%
\[
\sqrt{\dfrac{n}{k}}\left(  \overline{H}\left(  Z_{n-k:n}\right)  -\overline
{H}\left(  h\right)  \right)  =\sqrt{\dfrac{k}{n}}\left(  \frac{\overline
{H}\left(  Z_{n-k:n}\right)  }{\overline{H}\left(  h\right)  }-1\right)
=o_{\mathbb{P}}\left(  1\right)  .
\]
On the other hand $\sqrt{n/k}\left\{  \mathcal{W}_{n}\left(  \overline
{H}\left(  Z_{n-k:n}\right)  \right)  -\mathcal{W}_{n}\left(  \overline
{H}\left(  h\right)  \right)  \right\}  $ is a sequence of Gaussian rv's with
mean zero and variance
\[
\frac{n}{k}\left(  \overline{H}\left(  Z_{n-k:n}\right)  -\overline{H}\left(
h\right)  \right)  =\frac{\overline{H}\left(  Z_{n-k:n}\right)  }{\overline
{H}\left(  h\right)  }-1,
\]
which tends to zero (in probability), as $n\rightarrow\infty.$ This achieves
the proof.
\end{proof}

\end{document}